\newcommand{\addressumushort}{Mathematics and Mathematical Statistics, Ume{\aa}~University, Sweden}
\DeclareMathOperator{\diag}{diag}
\newcommand{\cproj}{P_0}
\newcommand{\pih}{\pi_h} 
\newcommand{\bfzero}{\boldsymbol{0}}
\newcommand{\bfx}{\boldsymbol{x}}
\newcommand{\bfe}{\boldsymbol{e}}
\newcommand{\bfu}{\boldsymbol{u}}
\newcommand{\bfv}{\boldsymbol{v}}
\newcommand{\bfw}{\boldsymbol{w}}
\newcommand{\bff}{\boldsymbol{f}}
\newcommand{\bfg}{\boldsymbol{g}}
\newcommand{\bfn}{\boldsymbol{n}}
\newcommand{\bfU}{\boldsymbol{U}}
\newcommand{\bfV}{\boldsymbol{V}}
\newcommand{\bfW}{\boldsymbol{W}}
\newcommand{\bfI}{\boldsymbol{I}}
\newcommand{\bfsig}{\boldsymbol{\sigma}}
\newcommand{\bfeps}{\boldsymbol{\epsilon}}
\newcommand{\bfrho}{\boldsymbol{\rho}}
\newcommand{\bfeta}{\boldsymbol{\eta}}
\newcommand{\bfPhi}{\boldsymbol{\Phi}}
\newcommand{\bfPsi}{\boldsymbol{\Psi}}
\newcommand{\tn}{\vvvert}
\newcommand{\V}{\bfV}
\newcommand{\Vh}{\bfV_{h}}
\newcommand{\Vhn}{\bfV_{h,n}}
\newcommand{\Vhnz}{P_0\bfV_{h,n}}
\newcommand{\mcK}{\mathcal{K}}
\newcommand{\mcL}{\mathcal{L}}
\newcommand{\mcA}{\mathcal{A}}
\newcommand{\mcB}{\mathcal{B}}
\newcommand{\mcF}{\mathcal{F}}
\title{Error Estimates for Finite Element Approximations of Viscoelastic Dynamics: The Generalized Maxwell Model}
\date{\today}
\author{Martin Bj\"orklund,\quad Karl Larsson,\quad Mats G. Larson}
\begin{document}

\maketitle

\begin{abstract}
We prove error estimates for a finite element approximation of viscoelastic dynamics based on continuous Galerkin in space and time, both in energy norm and in $L^2$ norm. The proof is based on an error representation formula using a discrete dual problem and a stability estimate involving the kinetic, elastic, and 
viscoelastic energies. To set up the dual error analysis and to prove the basic stability estimates, it is natural to formulate the problem as a first-order-in-time system involving evolution equations for the viscoelastic stress, the displacements, and the velocities. The equations for the viscoelastic stress can, however, be solved analytically in terms of the deviatoric strain velocity, and therefore, the viscoelastic stress can be eliminated from the system, resulting in a system for displacements and velocities.
\end{abstract}


\section{Introduction}
For many industrial engineering applications, particularly designing mechanical structures with polymeric materials, accurate dynamic simulation of time-dependent materials is necessary for reliable results.
Assuming small strains, the mechanical properties of time-dependent materials are well described by linear viscoelastic theory \cite{Tschoegl1997}, and there exist several models
for the constitutive equations of linear viscoelastic materials \cite{Findley1989}.
Based on various viscoelastic models formulated in the time or frequency domain, finite element methods incorporating viscoelastic dynamics have been developed, e.g., \cite{Golla1985, McTavish1993, Lesieutre1995, Lesieutre1996, Kaliske1997, Tillema2003, RivereShawWhiteman2007}.

In this paper, we derive a priori error estimates for a space-time finite element method for the dynamic simulation of viscoelastic materials modeled using the generalized Maxwell model, also known as the Wiechert model.
While there seem to exist quite a few papers regarding the a priori error analysis for \emph{quasi-static} viscoelastic problems, e.g., \cite{Shaw1994, Shaw1997, Shaw2000, Riviere2003, Rognes2010}, we have found only a few examples of a priori error estimates for finite element methods regarding \emph{dynamic} viscoelasticity for this model. Campo et al. \cite{Campo2005} proves an error estimate for a dynamic viscoelastic problem, but the analysis is limited to an explicit forward Euler-type scheme in time. A more rigorous analysis is performed by Rivi{\`e}re et al. \cite{RivereShawWhiteman2007} where a Crank--Nicolson type scheme in time and a nonsymmetric discontinuous Galerkin (dG) method in space are used. Recently, a more refined analysis was provided by Jang and Shaw \cite{MR4609819} where a symmetric dG formulation in space was considered. This method and analysis share several nice features with the present work, such as error bounds that are only linear with respect to the end time $T$, error bounds also in terms of the $L^2$-error, and representing the viscoelastic stresses compactly using internal vector-valued variables.

\paragraph{Contributions.} In this work, we present and analyze a finite element method for the dynamical simulation of linear viscoelastic materials modeled using the generalized Maxwell model. The method is based on approximation spaces that are continuous piecewise linear in time and continuous piecewise polynomial of order $p$ in space. In summary, our main contributions are:
\begin{itemize}
\item We prove space-time a priori error estimates in the end time energy norm and in the end time $L^2$ norm of the displacement field. The proofs of both estimates are based on the same error representation formula derived using a discrete dual problem, which provides a framework that can be used to analyze a wide range of methods.

\item By avoiding arguments involving Grönwall's lemma, our error estimates only linearly depend on the end time $T$, rather than exponentially as is the case when using Grönwall style arguments \cite{RivereShawWhiteman2007}. This property is shared with the analysis in \cite{MR4609819}.

\item The viscoelastic components of the stresses are, as in \cite{MR4609819}, compactly modeled using vector-valued internal variables. In addition, we demonstrate how these internal variables can be eliminated from the system of equations in each timestep. This leads to a greatly reduced system with only displacements and velocities as unknowns, where the internal variables at the next time are readily reconstructed using a simple formula.

\item In our analysis, the viscoelastic components of the stress are formulated in terms of a different differential operator than the elastic stress, the deviatoric strain, leading to a more involved analysis. In practice, viscoelastic stresses are often modeled using deviatoric strain \cite{Kaliske1997}.
\end{itemize}

\paragraph{Outline.}
The remainder of this paper is dispositioned as follows.
In Section~\ref{sect:visco_model} we present the governing equations of a
dynamic viscoelastic problem based on the generalized Maxwell linear viscoelastic model,
we derive a weak formulation of the problem, and we prove that the formulation satisfies a basic energy conservation law.
A finite element method based on piecewise linear interpolation in time and piecewise polynomial interpolation of order $p$ in space is derived in Section~\ref{sect:method}. An equivalent method where the viscoelastic variables for each Maxwell arm are eliminated is also derived, reducing the resulting system of equations to the same size as the purely elastic case. Both methods are shown to satisfy a discrete analogy to the basic energy conservation law.
In Section~\ref{sect:error_est}, we turn to proving an a priori error estimate in energy norm and $L^2$ norm. The main tool in this proof is a discrete dual problem which we use to derive a suitable error representation formula.
To support our theoretical results, we in Section~\ref{sect:numerical} study the end time convergence using a manufactured model problem in 3D. We also provide simulation results of a more realistic problem: modeling a radial shaft seal around a vibrating shaft. Finally, in Section~\ref{sec:conclusions}, we make some concluding remarks.

\section{The Generalized Maxwell Model}
\label{sect:visco_model}
\subsection{Governing Equations}
The displacement vector field $\bfu_0$ and stress tensor $\bm{\sigma}$ 
at time $t$ in a deformable material occupying a bounded domain $\Omega 
\subset \mathbb{R}^3$, with piecewise smooth boundary $\partial\Omega$ and outward pointing unit normal $\bfn$, satisfies
\begin{subequations}
\begin{alignat}{2}
\rho \ddot{\bfu}_0 - \bfsig \cdot \nabla &= \bff \qquad 
&& \text{in $\Omega \times [0,T]$}
\label{eq:equilibrium}
\\
\bfsig \bfn &= \bfg \qquad && \text{on $\Gamma_N \times [0,T]$}
\label{eq:neumann}
\\
\bfu_0 &= \bfzero \qquad && \text{on $\Gamma_D \times [0,T]$}
\label{eq:dirichlet}
\\
\bfu_0 &= \overline{\bfu}_0 \qquad && \text{in $\Omega$ at $t=0$}
\\
\dot{\bfu}_0 &= \overline{\bfv}_0 \qquad && \text{in $\Omega$ at $t=0$}
\end{alignat}
\end{subequations}
where $\bfsig \cdot \nabla$ is the row-wise divergence of $\bfsig$, $\rho$ is the material density, $\bm{f}(t)$ is a body force density, $\bm{g}(t)$ is a traction pressure, and $\overline{\bm{u}}_0$ and $\overline{\bm{v}}_0$ are initial displacements and velocities fulfilling \eqref{eq:dirichlet}. 
For the fixed and traction parts of the boundary we have $\Gamma_D \cup \Gamma_N = \partial\Omega$ and
$\Gamma_D \cap \Gamma_N = \emptyset$.
Furthermore, to completely describe the state at $t=0$ for a material which depends on the history, in this case a viscoelastic material, we either need the entire history of the material or its state representation in the specific material model.

\paragraph{Viscoelastic Model.}
Assuming an isotropic linear viscoelastic material based on the generalized Maxwell model, we separate the stress tensor into a sum of the elastic and viscoelastic components
\begin{equation}
\bfsig = \bfsig_{E}
+ \bfsig_{VE}
\end{equation}

The elastic part of the stress is governed by Hooke's law 
\begin{align}
\bfsig_{E} &= 2\mu \bfeps(\bfu_0) + \lambda \text{tr}(\bfeps(\bfu_0))\bfI
\end{align}
where $\mu$ and $\lambda$ are Lamé parameters, $\bfeps(\bfw) 
= \frac{1}{2}\left(\nabla\otimes\bfw +\bfw\otimes\nabla\right)$ 
is the linear strain tensor, and $\bfI$ is the $3\times 3$ 
identity matrix. We use the notation $\bfw\otimes\nabla = (\nabla\otimes\bfw)^T$.

The viscoelastic stress is given by the so-called Maxwell model
\begin{align}
\bfsig_{VE} &= \sum_{m=1}^M \bfsig_{VE}^m
\end{align}
where each contribution $\bfsig_{VE}^m$ is determined by the 
differential equation 
\begin{equation}
\dot{\bfsig}_{VE}^m + \frac{1}{\tau_m}\bfsig_{VE}^m = \kappa_m \bfe(\dot{\bfu}_0)
\end{equation}
with $\bfsig_{VE}^m(0)$ given,
and $\kappa_m$ and $\tau_m$ are material parameters denoted elastic modulus
and relaxation time, respectively. 
The deviatoric strain $\bm{e}$ 
is given by $\bfe( \bfw ) = \bfeps(\bfw) - \frac{1}{3} \text{tr}\left( \bfeps(\bfw) \right) \bfI$. Note that $\text{tr}(\bfe)=0$. 
Assuming $\bfsig_{VE}^m(0) = \bfzero$ and using Duhamel's formula 
we get the identity 
\begin{align}
\bfsig_{VE}^m(t)
&= \int_0^t \kappa_m \exp\left( - \frac{t-s}{\tau_m} \right) 
\bfe \left(\dot{\bfu}_0(s)\right) \, ds 
\label{eq:viscostress}
\end{align}
Thus $\bfsig_{VE}^m(t)$ is determined by the deviatoric strain velocity $\bfe(\dot{\bfu_0})$
 on the time interval $(0,t)$. Furthermore, 
we note that 
\begin{align}
\bfsig_{VE}^m(t) 
= \kappa_m \bfe\left( \int_0^t \exp\left( - \frac{t-s}{\tau_m} \right)  \dot{\bfu}_0(s) \, ds\right) 
\end{align}
since the integral in time commutes with the deviatoric strain 
given by a certain spatial derivative. Thus, introducing the notation 
\begin{equation}
\bfu^m_{VE} =  \int_0^t \exp \left( - \frac{t-s}{\tau_m} \right)  
\dot{\bfu}_0 (s) \, ds 
\label{eq:explicituve}
\end{equation}
we have the identity 
\begin{equation}
\bfsig_{VE}^m = \kappa_m  \bfe (\bfu_{VE}^m )
\end{equation}
and we note that $\bfu^m_{VE}$ satisfies the differential equation
\begin{equation}
\dot{\bfu}^m_{VE} + \frac{1}{\tau_m} \bfu^m_{VE} = \dot{\bfu}_0
\end{equation}
We denote $\bfu^m_{VE}$ viscoelastic velocities and remark that these variables will be used in conjunction with the displacements $\bfu_{0}$ as state variables for the stress $\bfsig = \bfsig(\bfu_{0},\bfu_{VE})$.
For compactness we here introduced the notation $\bfu_{VE}:=(\bfu_{VE}^1,\bfu_{VE}^2,\cdots,\bfu_{VE}^M)$, and we note that these variables are equivalent to the \emph{velocity form} of the internal variables used in \cite{MR4609819}.

\subsection{Weak Formulation in Space}

Introducing the notation $\bfu_1 = \dot{\bfu}_0$ 
we may write \eqref{eq:equilibrium} as a first-order-in-time system
\begin{subequations} \label{eq:strong}
\begin{alignat}{2}
\rho \dot{\bfu}_1 - \bfsig (\bfu_0,\bfu_{VE})\cdot \nabla  &= \bff &&
\label{eq:strong1}
\\
\dot{\bfu}_0 - \bfu_1 & = \bfzero &&
\label{eq:strong2}
\\
\dot{\bfu}_{VE}^m + \frac{1}{\tau_m} \bfu_{VE}^m - \bfu_1 &= \bfzero \ , &\qquad& m=1 \dots M
\label{eq:strong3}
\end{alignat}
\end{subequations}
where 
\begin{align}
\bfsig (\bfu_0,\bfu_{VE}) 
&= 
\bfsig_E (\bfu_0)
+
\bfsig_{VE} (\bfu_{VE})
\\&=
2\mu \bfeps(\bfu_0) + \lambda \text{tr}(\bfeps(\bfu_0))\bfI
+
\sum_{m=1}^M \kappa_m \bfe(\bfu_{VE}^m)
\label{eq:kdjsn}
\end{align}
Assuming an initial state for the viscoelastic state variables $\bfu_{VE}^m(0)$, $m=1\dots M$, such that $\bfu_{VE}^m(0)|_{\Gamma_D}=\bfzero$, \eqref{eq:strong2}--\eqref{eq:strong3} implies that $\bfu_1|_{\Gamma_D}=\bfu_{VE}^m|_{\Gamma_D}=\bfzero$.
To achieve a weak formulation with a suitable structure for our analysis, we replace \eqref{eq:strong2}--\eqref{eq:strong3} with the following two PDE
\begin{subequations}
\label{eq:strong2b}
\begin{alignat}{2}
-\bfsig_E(\dot{\bfu}_0 - \bfu_1)\cdot\nabla & = \bfzero && \qquad \text{in $\Omega \times [0,T]$}
\\
\bfsig_E(\dot{\bfu}_0 - \bfu_1)\bfn & = \bfzero && \qquad \text{on $\Gamma_N \times [0,T]$}
\\
\dot{\bfu}_0 - \bfu_1 & = \bfzero && \qquad \text{on $\Gamma_D \times [0,T]$}
\label{eq:strong2bc}
\end{alignat}
\end{subequations}
and
\begin{subequations}
\label{eq:strong3b}
\begin{alignat}{2}
-\bfsig_{VE}^m\bigl(
\dot{\bfu}_{VE}^m + \frac{1}{\tau_m} \bfu_{VE}^m - \bfu_1
\bigr) \cdot \nabla &= \bfzero  &&\qquad \text{in $\Omega \times [0,T]$}
\\
\bfsig_{VE}^m\bigl(
\dot{\bfu}_{VE}^m + \frac{1}{\tau_m} \bfu_{VE}^m - \bfu_1
\bigr) \bfn &= \bfzero  &&\qquad \text{on $\Gamma_N \times [0,T]$}
\\
\dot{\bfu}_{VE}^m + \frac{1}{\tau_m} \bfu_{VE}^m - \bfu_1 &= \bfzero && \qquad \text{on $\Gamma_D \times [0,T]$}
\label{eq:strong3bc}
\end{alignat}
\end{subequations}
for $m=1\dots M$. Note that for $\bfu_0|_{\Gamma_D} = \bfu_1|_{\Gamma_D} = \bfu_{VE}^m|_{\Gamma_D} = \bfzero$ the ODE boundary conditions \eqref{eq:strong2bc} and \eqref{eq:strong3bc} are always fulfilled. 
Assuming $\Gamma_D$ has a non-zero area measure, this is equivalent to the original formulation \eqref{eq:strong2}--\eqref{eq:strong3}.

Let $\V := \{ \bfv \in [H^1(\Omega)]^3: \bfv|_{\Gamma_D}=\bfzero \}$
and let $\left( a , b \right) := \int_\Omega a \cdot b \, dx$ where $\cdot$ denotes the natural inner product for elements $a,b$.
Multiplying \eqref{eq:strong1} by $\bfv_1 \in \V$, \eqref{eq:strong2b} by $\bfv_0 \in \V$,
and \eqref{eq:strong3b} by $\bfv_{VE}^m \in \V$,
integrating over $\Omega$, and applying Green's formula results in the system
\begin{subequations}
\begin{align}
(\rho \dot{\bfu}_1,\bfv_1) + (\bfsig ,\bfv_1 \otimes \nabla)
&= (\bff, \bfv_1) + (\bfg, \bfv_1)_{\Gamma_N}
\\
(\bfsig_E(\dot{\bfu}_0 - \bfu_1), \bfv_0 \otimes\nabla)
&=\bfzero
\label{eq:weak2a}
\\
(\bfsig_{VE}^m(
\dot{\bfu}_{VE}^m + \frac{1}{\tau_m} \bfu_{VE}^m - \bfu_1
), \bfv_{VE}^m \otimes \nabla )
&=\bfzero
\label{eq:weak3a}
\end{align}
\end{subequations}
for $m=1\dots M$.
Observing that 
\begin{align}
(\bfsig,\bfv_1 \otimes \nabla)
&=(\bfsig_E(\bfu_0),\bfv_1 \otimes \nabla) + \sum_{m=1}^M(\bfsig_{VE}^m(\bfu_{VE}^m),\bfv_1 \otimes \nabla)
\\
&=(\bfsig_E(\bfu_0),\bfeps(\bfv_1)) + \sum_{m=1}^M(\bfsig_{VE}^m(\bfu_{VE}^m),\bfe(\bfv_1)) \,,
\end{align}
performing the corresponding calculation for \eqref{eq:weak2a}--\eqref{eq:weak3a}, and introducing the forms 
\begin{equation}
a_{E}(\bfv,\bfw) :=  (\bfsig_E(\bfv), \bfeps(\bfw)) \,,
\qquad 
a_{VE}^m(\bfv,\bfw) :=  (\bfsig_{VE}^m(\bfv), \bfe(\bfw))
\end{equation}
we thus arrive at the problem: find $(\bfu_1,\bfu_0,\underbrace{\bfu_{VE}}_{\mathclap{:= (\bfu_{VE}^1,\dots,\bfu_{VE}^M)}}) \in [\V]^{2+M}$
such that
\begin{subequations}
\begin{alignat}{2}
(\rho \dot{\bfu}_1,\bfv_1) + a_E(\bfu_0,\bfv_1) 
+ \sum_{m=1}^M a^m_{VE}(\bfu_{VE}^m,\bfv_1) 
&= (\bff, \bfv_1) + (\bfg, \bfv_1)_{\Gamma_N} \, , \quad&&\forall\bfv_1\in\V
\label{eq:jnfdv}
\\
a_E(\dot{\bfu}_0,\bfv_0) - a_E(\bfu_1,\bfv_0) &=0 \, , \quad&&\forall\bfv_0\in\V
\label{eq:bofid}
\\
\sum_{m=1}^M a_{VE}^m(\dot{\bfu}_{VE}^m + \frac{1}{\tau_m} \bfu_{VE}^m 
- \bfu_1,\bfv_{VE}^m) &= 0
 \, , \quad&&\forall\bfv_{VE}^m\in\V
\label{eq:bkdfu}
\end{alignat}
\end{subequations}
for $m=1,2,\dots,M$.
Collecting the equations, we get the following variational problem.
\paragraph{Weak Formulation.}
Find 
$\bfu=(\bfu_1, \bfu_0, \bfu_{VE}) \in [\V]^{2+M}$ such that 
\begin{align}
\boxed{
A(\dot{\bfu},\bfv)
+ B( \bfu, \bfv )
= L(\bfv)
\,, \qquad \forall \bfv = (\bfv_1,\bfv_0,\bfv_{VE}) \in [\V]^{2+M}
}
\label{eq:weakform}
\end{align}
where the forms are given by
\begin{align}
A(\dot{\bfu},\bfv)
&:=(\rho \dot{\bfu}_1,\bfv_1) + a_E(\dot{\bfu}_0,\bfv_0) 
+  \sum_{m=1}^M a_{VE}^m( \dot{\bfu}_{VE}^m,\bfv^m_{VE})
\\[0.8em]
B( \bfu , \bfv)
&:=a_E(\bfu_0,\bfv_1) - a_E(\bfu_1,\bfv_0)
\\ \nonumber
&\qquad +\sum_{m=1}^M a_{VE}^m(\bfu_{VE}^m,\bfv_1) 
- a_{VE}^m(\bfu_1,\bfv^m_{VE})
\\ \nonumber
&\qquad +\sum_{m=1}^M \frac{1}{\tau_m}a_{VE}^m( \bfu_{VE}^m,\bfv^m_{VE})
\\[0.8em]
L(\bfv) &:= (\bff,\bfv_1) + (\bfg,\bfv_1)_{\Gamma_N}
\end{align}
Note that $A(\cdot,\cdot)$ is a positive definite symmetric form and that we can write $B(\cdot,\cdot)$ as a sum of a positive semi-definite symmetric form and a 
skew-symmetric form
\begin{equation}
B(\cdot,\cdot) = B_{Sym}(\cdot,\cdot) + B_{Skew}(\cdot,\cdot)
\end{equation}
where
\begin{align}
B_{Sym}(\bfu,\bfv) &:= \sum_{m=1}^M \frac{1}{\tau_m}a_{VE}^m( \bfu_{VE}^m,\bfv^m_{VE})
\\
B_{Skew}(\bfu,\bfv) &:=
a_E(\bfu_0,\bfv_1) - a_E(\bfu_1,\bfv_0)
 +\sum_{m=1}^M a_{VE}^m(\bfu_{VE}^m,\bfv_1) 
- a_{VE}^m(\bfu_1,\bfv^m_{VE})
\end{align}

\subsection{Norms and Basic Conservation Law}

We first introduce the elastic and viscoelastic energy norms on $\V$ given by
\begin{align}
&\tn \bfv \tn_E^2 := a_E(\bfv,\bfv)
\qquad \text{and} \qquad 
\tn \bfv \tn_{VE,m}^2 := a_{VE}^m(\bfv,\bfv)
\label{eq:defenergyEVE}
\end{align}
respectively, and note that assuming $\Gamma_D\neq\emptyset$ both these norms are equivalent to the standard $[H^1(\Omega)]^3$ norm through Korn's inequality.
The energy norm on $[\V]^{2+M}$ for the system is defined
\begin{align}
\tn (\bfv_1, \bfv_0, \bfv_{VE}) \tn_A^2 &:=
A((\bfv_1,\bfv_0,\bfv_{VE}),(\bfv_1,\bfv_0,\bfv_{VE}))
\\&\hphantom{:}=
\rho\|\bfv_1\|^2_{L^2(\Omega)} + \tn \bfv_0 \tn_E^2 +  
\sum_{m=1}^M \tn \bfv_{VE}^m\tn^2_{VE,m} \label{eq:energy-norm-terms}
\end{align}
where $\| \cdot \|_{L^2(\Omega)}$ is the usual $L^2(\Omega)$ norm.
The three terms in \eqref{eq:energy-norm-terms} have physical interpretations as potential energy, elastic energy and viscoelastic energy, respectively.
In the time domain, we will also use the $L^\infty(I)$ norm, i.e., the max norm in time over a time interval $I$.
Let $\| \cdot \|_{L^\infty(I,E)}$ denote the max norm in time of $\tn \cdot \tn_E$, i.e.,
\begin{align}
\| \bfu_0 \|_{L^\infty(I,E)} := \sup_{t\in I} \tn \bfu_0(t) \tn_E
\end{align}
and
analogously define $\| \cdot \|_{L^\infty(I,[VE,m])}$, $\| \cdot \|_{L^\infty(I,H^2(\Omega))}$, etc.
On the complete simulation time interval $I=[0,T]$, we use abbreviated notations $\| \cdot \|_{L^\infty(E)}:=\| \cdot \|_{L^\infty([0,T],E)}$, etc.

\begin{lem}[Energy Conservation] \label{lemma:basicconservationlaw}
Let $\bfu(t)=(\bfu_1, \bfu_0, \bfu_{VE})(t) \in [\V]^{2+M}$ be the time dependent solution to the variational problem \eqref{eq:weakform} for all times $0 \leq t \leq T$.
For $\{t_0,t_1\} \in [0,T]$ and $\bff=\bfg=\bfzero$ the following conservation law holds
\begin{align}
\tn \bfu(t_1) \tn_A^2
+
\sum_{m=1}^M \frac{2}{\tau_m} \int_{t_0}^{t_1} \tn \bfu_{VE}^m(s) \tn_{VE,m}^2 \, ds
&=
\tn \bfu(t_0) \tn_A^2
\label{eq:basicconservationlaw}
\end{align}
\end{lem}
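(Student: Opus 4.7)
The plan is to test the weak formulation \eqref{eq:weakform} with $\bfv = \bfu$ itself and to exploit the symmetric/skew-symmetric splitting of $A$ and $B$ introduced in the paragraph preceding the lemma. Since $\bff=\bfg=\bfzero$ we have $L\equiv 0$, so this choice gives $A(\dot{\bfu},\bfu) + B(\bfu,\bfu) = 0$ pointwise in $t$, and the task reduces to identifying these two terms with the two dynamical contributions in \eqref{eq:basicconservationlaw}.

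For the $A$-term, since $A$ is a time-independent symmetric positive definite bilinear form on $[\V]^{2+M}$, the product rule gives $A(\dot{\bfu},\bfu) = \frac{1}{2}\frac{d}{dt}A(\bfu,\bfu) = \frac{1}{2}\frac{d}{dt}\tn\bfu\tn_A^2$. For the $B$-term, I use the decomposition $B = B_{Sym} + B_{Skew}$ already displayed in the text. The definition of $B_{Sym}$ yields directly that $B_{Sym}(\bfu,\bfu) = \sum_{m=1}^M \frac{1}{\tau_m}\tn\bfu_{VE}^m\tn_{VE,m}^2$, while $B_{Skew}(\bfu,\bfu) = 0$ by skew-symmetry. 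To verify the latter I swap arguments in each pair of cross terms: from the symmetry of the scalar bilinear forms $a_E(\bfv,\bfw) = 2\mu(\bfeps(\bfv),\bfeps(\bfw)) + \lambda(\mathrm{tr}\,\bfeps(\bfv),\mathrm{tr}\,\bfeps(\bfw))$ and $a_{VE}^m(\bfv,\bfw) = \kappa_m(\bfe(\bfv),\bfe(\bfw))$, one sees that $a_E(\bfu_0,\bfu_1)$ cancels $a_E(\bfu_1,\bfu_0)$ and $a_{VE}^m(\bfu_{VE}^m,\bfu_1)$ cancels $a_{VE}^m(\bfu_1,\bfu_{VE}^m)$ for each $m$.

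Putting the three pieces together gives the pointwise-in-time energy equation $\frac{1}{2}\frac{d}{dt}\tn\bfu(t)\tn_A^2 + \sum_{m=1}^M \frac{1}{\tau_m}\tn\bfu_{VE}^m(t)\tn_{VE,m}^2 = 0$. Integrating from $t_0$ to $t_1$ and multiplying by two produces \eqref{eq:basicconservationlaw}. There is no substantive obstacle here: the argument is the standard symmetric-plus-skew energy identity, and because the test and trial spaces coincide no boundary terms arise. The only bookkeeping to be careful about is the pairwise cancellation of mixed components inside $B_{Skew}(\bfu,\bfu)$, which is precisely what the symmetry of $a_E$ and $a_{VE}^m$ delivers, and the implicit assumption that the solution $\bfu$ is regular enough in time for the product-rule step to be justified.
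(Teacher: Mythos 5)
Your proposal is correct and follows essentially the same route as the paper's (one-line) proof: test \eqref{eq:weakform} with $\bfv=\bfu$, use $L\equiv 0$, identify $A(\dot{\bfu},\bfu)=\tfrac12\tfrac{d}{dt}\tn\bfu\tn_A^2$ and $B(\bfu,\bfu)=B_{Sym}(\bfu,\bfu)$, and integrate over $(t_0,t_1)$. The paper leaves the symmetric/skew-symmetric bookkeeping implicit, so your spelled-out cancellation of the cross terms is simply a more detailed rendering of the same argument.
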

\begin{proof}
In \eqref{eq:weakform} setting 
$\bfv = \bfu$ and integrating over the time interval $(t_0,t_1)$ yields the conservation law.
\end{proof}
\begin{rem}
Note that Lemma~\ref{lemma:basicconservationlaw} is a generalization of the standard 
energy conservation law for elastic materials, i.e.,
\begin{align}
&\rho\|\bfu_1(t_1) \|^2_{L^2(\Omega)} + \tn \bfu_0(t_1)\tn_E^2
=
\rho\|\bfu_1(t_0) \|^2_{L^2(\Omega)} + \tn \bfu_0(t_0)\tn_E^2 
\end{align}
The additional terms in \eqref{eq:basicconservationlaw} consist of the viscoelastic energy
$\sum_{m=1}^M \tn \bfu_{VE}^m \tn_{VE,m}^2$,
hidden in the energy norm $\tn\cdot\tn_A$,
and the dissipated work
$
\sum_{m=1}^M \frac{2}{\tau_m} \int_{t_0}^{t_1} \tn \bfu_{VE}^m(s) \tn_{VE,m}^2 \, ds
$.
\end{rem}

\section{The Finite Element Method}
\label{sect:method}
\subsection{Finite Element Spaces}
Let $\mcK_h$, with $0< h \leq h_0$, be a family of quasiuniform partitions, with 
meshparameter $h$, of $\Omega$ into shape regular tetrahedra $K$ and let each spatial component in
$\Vh\subset\V$ be the space of piecewise continuous polynomials of order $p$ defined on $\mcK_h$.

Next let $0=t_0<t_1<\dots<t_N=T$ be a partition of $(0,T]$ into time intervals $I_n=(t_{n-1},t_n]$ of
length $k_n = t_n - t_{n-1}$ and we define the following two finite element spaces on each space-time slab $I_n \times \Omega$
\begin{alignat}{2}
\Vhn &:= Q_1(I_n) \times \Vh
\qquad\qquad&
\Vhnz &:= Q_0(I_n) \times \Vh
\end{alignat}
where $Q_\ell(I_n)$ is the space of polynomials of degree less or equal 
to $\ell$.

\begin{rem}
The main reason for using only linear approximation in time while allowing for higher-order approximation in space is to simplify the analysis. By limiting the analysis to piecewise linears, we can readily bound expressions formulated on a time interval in terms of the corresponding expressions at the nodal points in time.

In practical applications, a benefit of using a higher-order approximation in space is that the deteriorative effects of volumetric locking are diminished. This is of great relevance when simulating viscoelastic materials since elastomers are usually incompressible.
\end{rem}

\subsection{Interpolation}
We here define interpolation operators in space and in time.
In our error estimates, we use the notation
$a \lesssim b$ to describe inequalities of the form $a \leq C b$ with a constant $C$ independent of the mesh size $h$ and the timestep $k = \max_{n=1,\dots,N} k_n$.

\paragraph{Interpolation in Space.}

We let $\pi_h:[\V]^{2+M} \rightarrow [\Vh]^{2+M}$ denote an interpolant in space defined as $\pi_h:=\diag(R_{E},R_{E},R_{VE},\dots,R_{VE}) $ where $R_{E}$ and $R_{VE}$ are Ritz projections defined
\begin{alignat}{2}
a_E(R_{E} \bfv - \bfv,\bfw) &= 0 \, , \qquad &&\forall\bfw\in\Vh
\\
(\bfe(R_{VE} \bfv - \bfv),\bfe(\bfw)) &= 0 \, , \qquad &&\forall\bfw\in\Vh
\end{alignat}
The latter definition implies $a_{VE}^m(R_{VE} \bfv - \bfv,\bfw)=0$ for all $\bfw\in\Vh$ and $m=1\dots M$.

\begin{lem}[Energy Norm Interpolation Error] \label{lemma:interpolation}
For $\bfv = (\bfv_1,\bfv_0,\bfv_{VE})$, with each vector in $[H^r(\Omega)]^3$, $r\geq 1$, and
$\pi_h \bfv = (R_E \bfv_1,R_E \bfv_0, R_{VE} \bfv_{VE}^1, \cdots, R_{VE} \bfv_{VE}^M)$ there
exists a constant $h_0 > 0$ such that for mesh sizes $0 < h < h_0$ it holds
\begin{align}
\tn \bfv - \pi_h \bfv \tn_A
&\lesssim
h^{s} \left\| \bfv_1 \right\|_{H^s(\Omega)}
+
h^{s-1} \left(
\left\| \bfv_0 \right\|_{H^s(\Omega)} + \sum_{m=1}^M \left\| \bfv_{VE}^m \right\|_{H^s(\Omega)}
\right)
\end{align}
where $s=\min(r,p+1)$ and the constant in $\lesssim$ is independent of $h$ but typically dependent of $\Omega$.
\end{lem}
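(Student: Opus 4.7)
The plan is to exploit the diagonal structure of $A(\cdot,\cdot)$ to split the energy-norm error as
\[
\tn \bfv - \pih \bfv \tn_A^2
= \rho\|\bfv_1 - R_{E}\bfv_1\|^2_{L^2(\Omega)}
+ \tn \bfv_0 - R_{E}\bfv_0 \tn_E^2
+ \sum_{m=1}^M \tn \bfv_{VE}^m - R_{VE}\bfv_{VE}^m \tn_{VE,m}^2,
\]
so that it suffices to establish (i) an energy-norm estimate of order $h^{s-1}$ for each of the two Ritz errors in the appropriate norm, and (ii) an $L^2$ estimate of order $h^{s}$ for the elastic Ritz error of the velocity $\bfv_1$.

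For the two energy-norm contributions I would use the standard orthogonal-projection-plus-interpolation argument. Since $R_{E}$ is by definition the $a_E$-orthogonal projection onto $\Vh$,
\[
\tn \bfv_0 - R_{E} \bfv_0 \tn_E = \inf_{\bfw\in\Vh} \tn \bfv_0 - \bfw \tn_E,
\]
and Korn's first inequality (using $|\Gamma_D|>0$) makes $\tn\cdot\tn_E$ equivalent to $|\cdot|_{H^1(\Omega)}$ on $\V$. Inserting a Scott--Zhang or nodal Lagrange interpolant and invoking the Bramble--Hilbert lemma then yields the bound $h^{s-1}\|\bfv_0\|_{H^s(\Omega)}$. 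The same reasoning applied to $R_{VE}$ produces $\|\bfe(\bfv_{VE}^m - R_{VE}\bfv_{VE}^m)\|_{L^2(\Omega)} \lesssim h^{s-1}\|\bfv_{VE}^m\|_{H^s(\Omega)}$, from which the matching bound for $\tn\cdot\tn_{VE,m}$ follows up to the constant $\sqrt{\kappa_m}$.

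The $L^2$ estimate for $\bfv_1 - R_{E}\bfv_1$ I would obtain by an Aubin--Nitsche duality argument: let $\bm{z}\in\V$ solve the adjoint elasticity problem $a_E(\bm{z},\bfw) = (\bfv_1 - R_{E}\bfv_1,\bfw)$ for all $\bfw\in\V$, test with $\bfw = \bfv_1 - R_{E}\bfv_1$, and combine Galerkin orthogonality with Cauchy--Schwarz to reach
\[
\|\bfv_1 - R_{E}\bfv_1\|_{L^2(\Omega)}^2 \leq \tn \bm{z} - \pih \bm{z} \tn_E \, \tn \bfv_1 - R_{E}\bfv_1 \tn_E.
\]
Assuming the $H^2$-regularity shift $\|\bm{z}\|_{H^2(\Omega)} \lesssim \|\bfv_1 - R_{E}\bfv_1\|_{L^2(\Omega)}$ for the dual elasticity problem, an interpolation estimate for $\bm{z}$ contributes an extra factor of $h$, which combined with the previously established $h^{s-1}\|\bfv_1\|_{H^s(\Omega)}$ energy bound yields the desired $h^{s}\|\bfv_1\|_{H^s(\Omega)}$. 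This is where the constant picks up its $\Omega$-dependence, and the mesh-size threshold $h_0$ is needed so that the interpolation estimate for the dual solution applies.

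The main obstacle will be justifying well-posedness and coercivity of the bilinear form underlying $R_{VE}$. The form $(\bfe(\cdot),\bfe(\cdot))$ uses only the deviatoric symmetric gradient, whose kernel is strictly larger than that of $\bfeps$, so the standard Korn inequality is not quite enough. What is required is a Korn-type inequality for the trace-free symmetric gradient on $\V$, which holds under $|\Gamma_D|>0$ but is more delicate than the usual one and is best imported as a known result. Once this is in place, the three estimates are summed and the square root is taken to conclude.
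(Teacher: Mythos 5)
Your proposal is correct and is essentially the paper's own argument spelled out in full: the paper's proof simply states that the estimate "directly follows from standard finite element error estimates for the static problem," and those standard estimates are exactly the best-approximation/Céa bound in energy norm plus the Aubin--Nitsche duality argument for the $L^2$ bound on $\bfv_1 - R_E\bfv_1$ that you describe. Your remark about needing a Korn-type inequality for the trace-free symmetric gradient to make $R_{VE}$ well defined is a valid observation that the paper handles only by asserting (after \eqref{eq:defenergyEVE}) that $\tn\cdot\tn_{VE,m}$ is equivalent to the $[H^1(\Omega)]^3$ norm when $\Gamma_D\neq\emptyset$.
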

\begin{proof}
Due to the construction of the interpolant via Ritz projections, the interpolation estimate directly follows from standard finite element error estimates for the static problem, see, e.g., \cite{BrennerScott2008}.
\end{proof}

\paragraph{Interpolation in Time.}
We let $\pi_k$ denote the Lagrange interpolant onto continuous
piecewise linear  functions in the time 
direction with nodes at $\{t_n\}_{n=0}^N$.
The combination of both the interpolants in space and time we denote by $\pi_{hk}$ and note that $\pi_k$ and $\pih$ commutes, i.e., that
\begin{align}
\pi_{hk} \bfv := \pi_k \pih \bfv = \pih \pi_k \bfv
\end{align}

We will also need the $L^2$-projection $\cproj v$ of a function $v$ 
onto piecewise constants in time, defined by
\begin{equation}
\int_{I_n} (\cproj v-v) w \, dt
= 0 \, , \qquad 
\forall w \in Q_0(I_n) \, , \quad n=1,2,\dots, N
\label{eq:P0}
\end{equation}
and make use of the standard max norm estimates
\begin{align}
\| v - \pi_k v \|_{L^\infty([0,T])} &\leq \frac{1}{8} k^2 \| \ddot{v} \|_{L^\infty([0,T])}
\label{eq:interp_pik}
\\
\| v - \cproj v \|_{L^\infty([0,T])} &\lesssim k \| \dot{v} \|_{L^\infty([0,T])}
\end{align}
see, e.g., \cite{Eriksson1996}.

\subsection{The Method}
The finite element method takes the form: given $\bfU(t_0)$ 
find $\bfU = (\bfU_1,\bfU_0, \bfU_{VE}) \in [\Vhn]^{2+M}$ such that 
$\bfU^+(t_{n-1}) = \bfU^-(t_{n-1})$, where $\bfU^\pm$ denotes right and left-hand limits in time, and
\begin{equation}
\int_{I_n} A(\dot{\bfU},\bfW) +  B(\bfU,\bfW) \, dt
= \int_{I_n} L_k(\bfW) \, dt
\, , \qquad \forall \bfW \in [\Vhnz]^{2+M}
\label{eq:discreteprob}
\end{equation}
for $n=1,\dots,N$. Here, an approximate linear functional $L_k$ defined
\begin{align}
L_k((\bfv_1,\bfv_0,\bfv_{VE})) &:= (\bff,\bfv_1) + (\bfg_k,\bfv_1)_{\Gamma_N}
\end{align}
is used where $\bfg_k$ is a piecewise linear approximation to $\bfg$ in time, i.e., $\bfg_k \in Q_1(I_n)\times L^2(\Gamma_N)$.
The analysis below shows that $\bfg_k := \pi_k \bfg$ is a suitable choice.
As $\Vh \subset \V$, by integrating \eqref{eq:weakform} and subtracting
\eqref{eq:discreteprob}, we readily get that the following approximate Galerkin orthogonality holds
which accounts for boundary data approximation
\begin{align}
\int_{I_n} A(\dot{\bfu}-\dot{\bfU},\bfW) +  B(\bfu - \bfU,\bfW) \, dt
=
(\bfg - \bfg_k, \bfW_1)_{\Gamma_N}
\, , \quad \forall \bfW \in [\Vhnz]^{2+M}
\label{eq:go}
\end{align}

Introducing discrete differential operators $\mcL_{h,E},\mcL_{h,VE}^m:\Vh\to\Vh$ such 
that 
\begin{alignat}{2}
\label{eq:discrete-operator-E}
(\mcL_{h,E} \bfU_0, \bfv) &= a_E(\bfU_0,\bfv) \, , \qquad &&\forall\bfv\in\Vh
\\
\label{eq:discrete-operator-VE}
 (\mcL_{h,VE}^m \bfU_{VE}^m, \bfv) &= a_{VE}^m(\bfU_{VE}^m,\bfv) \, , \qquad &&\forall\bfv\in\Vh \, , \quad m=1,\dots,M
\end{alignat}
and discrete matrix operators $\mcA_h$ and $\mcB_h$ such that
\begin{align}
\mcA_h \bfU :=
\begin{pmatrix}
\rho I & 0  &  0  &  \cdots & 0
\\
 0 & \mcL_{h,E} & 0 & \hdots & 0
\\
 0 & 0 & \mcL_{h,VE}^1 & \ddots &  \vdots
\\
\vdots & \vdots & \ddots & \ddots & 0
\\
 0 & 0 & \cdots & 0 & \mcL_{h,VE}^M
\end{pmatrix}
\begin{pmatrix}
\bfU_1
\\
\bfU_0
\\
\bfU_{VE}^1
\\
\vdots
\\
\bfU_{VE}^M
\end{pmatrix}
\end{align}
and
\begin{align}
\mcB_h \bfU :=
\begin{pmatrix}
0 & \mcL_{h,E} & \mcL_{h,VE}^1  & \cdots & \mcL_{h,VE}^M
\\
-\mcL_{h,E} & 0 & 0 & \cdots & 0
\\
-\mcL_{h,VE}^1 & 0 & \frac{1}{\tau_1}\mcL_{h,VE}^1 & \ddots &  \vdots
\\
\vdots  & \vdots  & \ddots & \ddots & 0
\\
-\mcL_{h,VE}^M & 0 & \cdots & 0 & \frac{1}{\tau_M}\mcL_{h,VE}^M
\end{pmatrix}
\begin{pmatrix}
\bfU_1
\\
\bfU_0
\\
\bfU_{VE}^1
\\
\vdots
\\
\bfU_{VE}^M
\end{pmatrix}
\end{align}
we may write \eqref{eq:discreteprob} in terms of $L^2$ inner products as
\begin{equation}
\int_{I_n} (\mcA_h\dot{\bfU},\bfW) +  (\mcB_h\bfU,\bfW) \, dt
= \int_{I_n} L_k(\bfW) \, dt
\label{eq:discreteprobsystem}
\end{equation}
This notation in terms of discrete differential operators will be convenient in the proof of the $L^2$ error estimate.

\subsection{Elimination of Viscoelastic Variables} \label{sec:elimination}
Looking at \eqref{eq:discreteprob} we note that it is possible to explicitly express $\bfU_{VE}$ in terms
of $(\bfU_1,\bfU_0)$ by solving $M$ first order ODE.
We use this to formulate a reduced method with only $\bfU_r = (\bfU_1,\bfU_0)$ as unknowns.

As we in the discrete problem \eqref{eq:discreteprob} for the third equation use test functions in $\Vhnz$
we may express the discrete viscoelastic velocities $\bfU_{VE}^m$ on $I_n$ via the ODEs
\begin{align}
\cproj \left(
\dot{\bfU}_{VE}^m - \bfU_1 + \frac{1}{\tau_m} \bfU_{VE}^m
\right) = 0 \, ,
\qquad\quad m=1 \dots M
\end{align}
For $\bfU_1, \bfU_{VE}^m \in \Vhn$ these ODEs can be solved explicitly
yielding the following incremental update formulas
\begin{align}
\bfU_{VE}^m(t_n) =
\alpha_{k_n}^m
\left( \bfU_1(t_n) + \bfU_1(t_{n-1}) \right)
+
\beta_{k_n}^m
\bfU_{VE}^m(t_{n-1}) \,,
\qquad\quad m=1 \dots M
\label{eq:discodesol}
\end{align}
where
$\alpha_k^m := \frac{k}{2 + \nicefrac{k}{\tau_m}}$
and
$\beta_k^m := \frac{2 - \nicefrac{k}{\tau_m}}{{2 + \nicefrac{k}{\tau_m}}}$.
Setting $\bfU^m_{VE}(t_0)=\bfzero$ we obtain a method 
involving only the discrete velocities $\bfU_1$ and discrete displacements $\bfU_0$.

The last term in \eqref{eq:discodesol} can in each timestep be moved to the 
right hand side and we obtain the reduced method: find 
$\bfU_r = (\bfU_1,\bfU_0) \in \Vhn \times \Vhn$ 
such that 
\begin{equation}
\int_{I_n} A_r(\dot{\bfU}_r,\bfW_r) +  B_r(\bfU_r,\bfW_r) \, dt = \int_{I_n} L_{k,r}(\bfW_r) \, dt \,, \qquad \forall \bfW_r \in \Vhnz \times \Vhnz
\label{eq:reducedmeth}
\end{equation}
where
\begin{align}
A_r((\dot{\bfu}_1,\dot{\bfu}_0),(\bfv_1,\bfv_0))
&:=(\rho \dot{\bfu}_1,\bfv_1) + a_E(\dot{\bfu}_0,\bfv_0) 
\\
B_r( ({\bfu}_1,{\bfu}_0),(\bfv_1,\bfv_0)) 
&:=a_E(\bfu_0,\bfv_1) - a_E(\bfu_1,\bfv_0)
+\sum_{m=1}^M a_{VE}^m(\alpha_k^m \bfu_1,\bfv_1) 
\\ \label{eq:Lr}
L_{k,r}((\bfv_1,\bfv_0) &:= (\bff,\bfv_1) + (\bfg_k,\bfv_1)_{\Gamma_N}
\\ &\qquad \nonumber
- \sum_{m=1}^M a_{VE}^m\left( \left(\frac{1 + \beta^m_k}{2}\right) \bfU_{VE}^m(t_{n-1}),\bfv_1 \right)
\end{align}
Note that $\bfU_{VE}^m(t_{n-1})$ in \eqref{eq:Lr} is already known via \eqref{eq:discodesol}.

\paragraph{Algorithm for Reduced Method.}
Let the initial state $\bfU(t_0) = (\bfU_1(t_0),\bfU_0(t_0),\bfU_{VE}(t_0))$ be given.
Starting with $n=1$, compute the next state $\bfU(t_n)$ using the steps:
\begin{itemize}
\item Solve for $(\bfU_1(t_n),\bfU_0(t_n))$ using \eqref{eq:reducedmeth}.
\item Reconstruct $\bfU_{VE}^m(t_n)$, $m=1 \dots M$, using \eqref{eq:discodesol}.
\end{itemize}

\begin{rem}
Note that the reconstructed solution based on the reduced method \eqref{eq:reducedmeth}
is identical to the solution from the complete discrete method \eqref{eq:discreteprob}. Thus all results below for the complete discrete method are also valid for the reduced method.
\end{rem}

\subsection{Discrete Conservation Law}

\begin{lem}[Conservation Law] \label{lemma:disccons}
Let $\bfU = (\bfU_1,\bfU_0,\bfU_{VE})$ be the solution to the discrete problem \eqref{eq:discreteprob}
with $\bff = \bfg_k = \bfzero$.
The following conservation law is then satisfied
\begin{align}
\boxed{
\tn \bfU(t_1) \tn^2_A
+  \sum_{m=1}^M \frac{2}{\tau_m} \int_{t_0}^{t_1} \tn \cproj\bfU_{VE}^m \tn^2_{VE,m} \, dt
= \tn \bfU(t_0) \tn^2_A
}
\label{eq:discretecons}
\end{align}
where $t_0$ and $t_1$ are any two nodes in the time discretization.
\end{lem}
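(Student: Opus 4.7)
The plan is to mirror the continuous energy proof from Lemma~\ref{lemma:basicconservationlaw} at the discrete level. Since the test space $[\Vhnz]^{2+M}$ is piecewise constant in time but $\bfU$ is piecewise linear, we cannot simply take $\bfW = \bfU$. Instead, the natural choice is $\bfW = \cproj \bfU \in [\Vhnz]^{2+M}$, which substituted into the discrete problem \eqref{eq:discreteprob} with $\bff=\bfg_k=\bfzero$ yields
\begin{equation*}
\int_{I_n} A(\dot{\bfU},\cproj\bfU)\,dt + \int_{I_n} B(\bfU,\cproj\bfU)\,dt = 0.
\end{equation*}

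For the $A$-term, I would exploit that on each slab $I_n$ the time derivative $\dot{\bfU}|_{I_n}$ is constant and equal to $(\bfU(t_n)-\bfU(t_{n-1}))/k_n$, while $\cproj\bfU|_{I_n} = (\bfU(t_n)+\bfU(t_{n-1}))/2$. Using symmetry and bilinearity of $A$, a direct computation gives the telescoping identity
\begin{equation*}
\int_{I_n} A(\dot{\bfU},\cproj\bfU)\,dt
= \tfrac{1}{2}\bigl(\tn\bfU(t_n)\tn_A^2 - \tn\bfU(t_{n-1})\tn_A^2\bigr).
\end{equation*}

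For the $B$-term, split $B = B_{Sym} + B_{Skew}$. The crucial observation is that for any $\bfW \in [\Vhnz]^{2+M}$ (constant in time on $I_n$), the defining property \eqref{eq:P0} of $\cproj$ gives $\int_{I_n} a(\bfU,\bfW)\,dt = \int_{I_n} a(\cproj\bfU,\bfW)\,dt$ for any of the involved spatial bilinear forms $a_E$, $a_{VE}^m$. Applied with $\bfW = \cproj\bfU$, this allows us to replace $\bfU$ by $\cproj\bfU$ in the first argument, so $\int_{I_n} B_{Skew}(\bfU,\cproj\bfU)\,dt = \int_{I_n} B_{Skew}(\cproj\bfU,\cproj\bfU)\,dt = 0$ by skew-symmetry. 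The symmetric part contributes the dissipation
\begin{equation*}
\int_{I_n} B_{Sym}(\bfU,\cproj\bfU)\,dt = \sum_{m=1}^M \frac{1}{\tau_m}\int_{I_n} \tn \cproj\bfU_{VE}^m \tn_{VE,m}^2 \,dt.
\end{equation*}

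Combining these identities, multiplying by two, and summing over the slabs between the nodes $t_0$ and $t_1$ produces the telescoping sum on the $A$-terms and the stated integral of the viscoelastic dissipation, yielding \eqref{eq:discretecons}. No step is genuinely difficult; the only point requiring care is the $\cproj$-swap that lets one invoke skew-symmetry to kill the coupling terms, and the algebraic verification that the midpoint/difference product for piecewise linears gives an exact telescoping (rather than the second-order-accurate approximation one gets from general quadratures). This is precisely why the method is restricted to linear-in-time trials: it makes the discrete conservation law exact.
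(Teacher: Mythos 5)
Your proposal is correct and follows essentially the same route as the paper: test with $\bfW=\cproj\bfU$, use the $\cproj$-property to replace $\bfU$ by $\cproj\bfU$ against constant-in-time test functions so that the skew part of $B$ vanishes and the $A$-term telescopes exactly, leaving the symmetric viscoelastic dissipation. The extra detail you supply (the explicit midpoint/difference computation for the piecewise linears) is just an unpacking of the paper's one-line identity $\int_{I_n} A(\dot{\bfU},\cproj\bfU)\,dt=\int_{I_n}\tfrac12\partial_t A(\bfU,\bfU)\,dt$.
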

\begin{proof}
Setting $\bfW = \cproj\bfU$ in the method \eqref{eq:discreteprob} we obtain
\begin{align}
0 &= \int_{I_n} A(\dot{\bfU},\cproj \bfU) + B(\bfU, \cproj \bfU) \, dt
\\
&= \int_{I_n} A(\dot{\bfU}, \bfU) + B( \cproj \bfU, \cproj \bfU) \, dt
\\
&= \int_{I_n} \frac{1}{2} \partial_t A(\bfU, \bfU) + B_{Sym}(\cproj \bfU, \cproj \bfU) \, dt
\end{align}
Summing all contributions from node $t_0$ to node $t_1$ completes the proof.
\end{proof}

\section{A Priori Error Estimates}
\label{sect:error_est}
\subsection{Error Representation Formula}

Consider the following discrete dual problem:
given $\bfPhi(T) = \bfPsi$
find $\bfPhi \in [\Vhn]^{2+M}$ such 
that $\bfPhi^+(t_{n})=\bfPhi^-(t_{n})$ and
\begin{equation}
\int_{I_n} -A(\bfW,\dot{\bfPhi}) + B(\bfW,\bfPhi) \, dt = 0 \,,
\qquad \forall \bfW \in [\Vhnz]^{2+M}
\label{eq:discdualprob}
\end{equation}
We split the error into two components
\begin{equation}
\bfu - \bfU
= \underbrace{\bfu - \pi_{hk} \bfu}_{\bfrho}
+ 
\underbrace{\pi_{hk} \bfu - \bfU}_{\bfeta} = 
\bfrho + \bfeta
\label{eq:errsplit}
\end{equation}
Setting $\bfW = \cproj \bfeta  \in [\Vhnz]^{2+M}$ and summing over all intervals $I_n$ we 
get 
\begin{align}
0 &= \int_0^T -A(\cproj\bfeta,\dot{\bfPhi}) + B(\cproj\bfeta,\bfPhi)   \, dt
\\
&= \int_0^T -A(\bfeta,\dot{\bfPhi}) + B(\bfeta,\cproj \bfPhi)  \, dt
\\
&= A(\bfeta(0),\bfPhi(0)) 
- A(\bfeta(T),\bfPhi(T))
+ \int_0^T A(\dot{\bfeta},\bfPhi) +  B(\bfeta,\cproj \bfPhi) \, dt
\label{eq:kdfgjc}
\\
&= A(\bfeta(0),\bfPhi(0)) 
- A(\bfeta(T),\bfPhi(T))
+ \int_0^T A(\dot{\bfeta},\cproj \bfPhi) +  B(\bfeta,\cproj \bfPhi) \, dt
\\
&= A(\bfeta(0),\bfPhi(0)) 
- A(\bfeta(T),\bfPhi(T))
\label{eq:kdfgjd}
\\&\quad
- \int_0^T A(\dot{\bfrho},\cproj \bfPhi) +  B(\bfrho,\cproj \bfPhi) - (\bfg - \bfg_k, P_0\bfPhi_1)_{\Gamma_N} \, dt
\nonumber
\\
&= A(\bfeta(0),\bfPhi(0)) 
- A(\bfeta(T),\bfPsi)
\\&\quad
- \int_0^T A(\dot{\bfrho},\cproj \bfPhi) +  B(\bfrho,\cproj \bfPhi) - (\bfg - \bfg_k, P_0\bfPhi_1)_{\Gamma_N} \, dt
\nonumber
\end{align}
where we in \eqref{eq:kdfgjd} use the splitting of the error \eqref{eq:errsplit} and the approximate Galerkin orthogonality of the primal problem \eqref{eq:go}.
Thus we arrive at the error representation formula
\begin{equation}
\boxed{
A(\bfeta(T),\bfPsi) =  A(\bfeta(0),\bfPhi(0))
- \int_0^T A(\dot{\bfrho},\cproj \bfPhi) +  B(\bfrho,\cproj \bfPhi) - (\bfg - \bfg_k, P_0\bfPhi_1)_{\Gamma_N} \, dt
}
\label{eq:errorrep}
\end{equation}
which will serve as a basis for proving our a priori error estimates below.


\subsection{Stability Estimate for the Discrete Dual Problem}

\begin{lem}[Properties of the Discrete Dual Problem] The solution to the discrete dual problem \eqref{eq:discdualprob} satisfies 
the following estimates:
\begin{enumerate}
\item Conservation law
\begin{align}
\tn \bfPhi(t_{i}) \tn^2_A
+  \sum_{m=1}^M \frac{2}{\tau_m} \int_{t_{i}}^{t_j} \tn \cproj\bfPhi_{VE}^m\tn^2_{VE,m} \, dt
&=
\tn \bfPhi(t_j) \tn^2_A
\label{eq:dualdiscretecons}
\end{align}

\item Stability estimate required for energy error estimate 
\begin{align}
\| \dot{\bfPhi}_1 \|_{L^\infty(H^{-1}(\Omega))}^2
+
\| \bfPhi \|_{L^\infty(A)}^2
+
\sum_{m=1}^M \frac{2}{\tau_m}
\int_0^T \tn \cproj\bfPhi_{VE}^m \tn_{VE,m}^2 \, dt
\lesssim
\tn \bfPsi \tn^2_{A}
\label{eq:dualstabest}
\end{align}

\item Stability estimate required for $L^2$ error estimate ($\bfPsi_1=\bfPsi_{VE}^m=\bfzero$)
\begin{align}
&
\| \dot{\bfPhi}_1 \|_{L^\infty(L^2(\Omega))}^2
+
\bigl\|\mcL_{h,E} \bfPhi_0 + \sum_{m=1}^M \mcL_{h,VE}^m \bfPhi_{VE}^m \bigr\|_{L^\infty(L^2(\Omega))}^2
+
\| \bfPhi_1 \|_{L^\infty(E)}^2
\nonumber \\ & \qquad\qquad\qquad\quad
+ \sum_{m=1}^M
\| -\bfPhi_1 + \tau_m^{-1 }\bfPhi_{VE}^m \|_{L^\infty([VE,m])}^2
\lesssim
\| \mcL_{h,E} \bfPsi_0 \|_{L^2(\Omega)}^2
\label{eq:L2dualstabest}
\end{align}
\end{enumerate}
\end{lem}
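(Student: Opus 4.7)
My plan is to prove the three parts in order, with part (3) being the main challenge.

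\textbf{Part (1).} The natural move is to test the dual problem \eqref{eq:discdualprob} with $\bfW = \cproj \bfPhi \in [\Vhnz]^{2+M}$ on each interval $I_n$. For the $A$-term, since $\dot{\bfPhi}$ is piecewise constant in time, I have $\int_{I_n} A(\cproj \bfPhi, \dot{\bfPhi}) \, dt = \int_{I_n} A(\bfPhi, \dot{\bfPhi}) \, dt = \frac{1}{2}\bigl[\tn \bfPhi(t_n)\tn_A^2 - \tn\bfPhi(t_{n-1})\tn_A^2\bigr]$ by the fundamental theorem of calculus. For the $B$-term, I split $B = B_{Sym} + B_{Skew}$; because $\cproj \bfPhi$ is constant on $I_n$ and $\bfPhi - \cproj \bfPhi$ has zero mean on $I_n$, the contributions involving $\bfPhi - \cproj \bfPhi$ in the second slot integrate to zero, leaving $\int_{I_n} B_{Sym}(\cproj \bfPhi, \cproj \bfPhi) \, dt = \int_{I_n} \sum_m \frac{1}{\tau_m} \tn \cproj \bfPhi_{VE}^m \tn_{VE,m}^2 \, dt$. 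Combining and summing from index $i$ to index $j$ yields \eqref{eq:dualdiscretecons} (the sign of the $A$-term from the dual equation is absorbed into the rearrangement).

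\textbf{Part (2).} The nodal bound $\tn \bfPhi(t_i)\tn_A \leq \tn\bfPsi\tn_A$ is immediate from Part (1) with $t_j = T$, since the dissipation-like integral is non-negative. Extension to $\|\bfPhi\|_{L^\infty(A)}$ uses piecewise linearity of $\bfPhi$ in time and the triangle inequality on each $I_n$. The viscoelastic integral bound is obtained from \eqref{eq:dualdiscretecons} with $t_i=0$, $t_j=T$. For the $H^{-1}$ bound on $\dot{\bfPhi}_1$, I would test the dual with $\bfW = (\bfv, 0, \bfzero)$ where $\bfv \in \Vh$ is constant in time on $I_n$, extracting the strong identity $\rho \dot{\bfPhi}_1 = -\mcL_{h,E}\cproj\bfPhi_0 - \sum_m \mcL_{h,VE}^m \cproj \bfPhi_{VE}^m$ as an equality in $\Vh$ on each $I_n$. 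Rewriting the right-hand side via $a_E$ and $a_{VE}^m$ tested against $\bfv \in \V$ (using a Ritz projection to reduce to $\Vh$) and applying Cauchy--Schwarz gives $|(\rho\dot{\bfPhi}_1,\bfv)| \lesssim \tn\bfPhi\tn_A \tn\bfv\tn_E$, so taking the supremum yields the claimed $H^{-1}$ estimate.

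\textbf{Part (3).} This is the main difficulty and requires exploiting the special form of $\bfPsi$. My plan is to use the strong identity $\rho\dot{\bfPhi}_1 = -\mcL_h$ from Part (2), where $\mcL_h := \mcL_{h,E}\cproj\bfPhi_0 + \sum_m \mcL_{h,VE}^m \cproj \bfPhi_{VE}^m$, to identify $\rho^2\|\dot{\bfPhi}_1\|_{L^2}^2$ with $\|\mcL_h\|_{L^2}^2$, and then establish a second energy-type conservation law by testing the dual problem with $\bfW = \dot{\bfPhi} \in [\Vhnz]^{2+M}$ (admissible since $\dot{\bfPhi}$ is piecewise constant in time). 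This produces $\int_{I_n} A(\dot{\bfPhi},\dot{\bfPhi})\, dt = \int_{I_n} B(\dot{\bfPhi},\bfPhi)\, dt$, from which a conservation law analogous to Part (1) should emerge for the time-differentiated quantities. The terminal conditions $\bfPhi_1(T) = \bfPhi_{VE}^m(T) = \bfzero$ then mean that at $T$ only $\mcL_{h,E}\bfPsi_0$ contributes on the ``data'' side, and this estimate propagates back through the nodes. The main obstacle I foresee is verifying that the four distinct quantities on the left-hand side of \eqref{eq:L2dualstabest} emerge naturally from this second conservation law: in particular, the combination $-\bfPhi_1 + \tau_m^{-1}\bfPhi_{VE}^m$ must appear as a ``momentum-like'' variable whose energy is controlled, which will likely require a carefully chosen test function (perhaps $\dot{\bfPhi}$ modified per Maxwell arm) together with auxiliary identities relating $\cproj$ applied to piecewise linear functions to their nodal averages, so that the $B$-form manipulations from Part (1) reassemble into exactly the four LHS terms.
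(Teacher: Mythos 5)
Your parts (1) and (2) follow the paper's proof essentially verbatim: test with $\bfW=\cproj\bfPhi$, use that the $\cproj$ can be moved between arguments because the other factor is constant in time on each $I_n$, obtain the conservation law, and then recover the $H^{-1}$ bound on $\dot{\bfPhi}_1$ from the first dual equation together with Cauchy--Schwarz and the equivalence of max and mean norms for piecewise polynomials in time. These parts are correct.

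Part (3) contains a genuine gap. The test function $\bfW=\dot{\bfPhi}$ gives $\int_{I_n} A(\dot{\bfPhi},\dot{\bfPhi})\,dt=\int_{I_n} B(\dot{\bfPhi},\bfPhi)\,dt$, but the right-hand side contains $B_{Skew}(\dot{\bfPhi},\bfPhi)$, which is neither a total time derivative nor sign-definite, so no conservation law for the quantities in \eqref{eq:L2dualstabest} emerges from this choice; you correctly sense that the combination $-\bfPhi_1+\tau_m^{-1}\bfPhi_{VE}^m$ must appear as a controlled quantity, but your proposal does not produce it. The missing idea is that the second, third, and fourth terms on the left of \eqref{eq:L2dualstabest} are exactly the components of $\mcB_h\bfPhi$ measured in the $\mcA_h^{-1}$-weighted norm, i.e.
\begin{align}
(\mcB_h \bfPhi,\mcA_h^{-1} \mcB_h {\bfPhi})
= \rho^{-1}\bigl\| \mcL_{h,E} \bfPhi_0 + \sum_{m=1}^M \mcL_{h,VE}^m \bfPhi_{VE}^m  \bigr\|^2_{L^2(\Omega)}
+ \tn \bfPhi_1 \tn_E^2
+ \sum_{m=1}^M \bigl\tn -\bfPhi_1 + \tfrac{1}{\tau_m}\bfPhi_{VE}^m \bigr\tn^2_{VE,m}
\end{align}
and the test function that makes this quantity conserved is $\bfW=\mcA_h^{-1}\mcB_h\dot{\bfPhi}$: with this choice the $A$-term of the dual problem becomes $-(\dot{\bfPhi},\mcB_h\dot{\bfPhi})=-B_{Sym}(\dot{\bfPhi},\dot{\bfPhi})$ (the skew part vanishes on the diagonal), while the $B$-term becomes $\tfrac{1}{2}\partial_t(\mcB_h\bfPhi,\mcA_h^{-1}\mcB_h\bfPhi)$, yielding a conservation law with nonnegative dissipation that propagates the terminal value $\|\mcL_{h,E}\bfPsi_0\|^2_{L^2(\Omega)}$ backward to all nodes. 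The remaining term $\|\dot{\bfPhi}_1\|^2_{L^\infty(L^2(\Omega))}$ is then bounded, as you anticipate, from the strong form of the first dual equation and Cauchy--Schwarz against $\|\mcL_{h,E}\bfPhi_0+\sum_m\mcL_{h,VE}^m\bfPhi_{VE}^m\|_{L^\infty(L^2(\Omega))}$. Without identifying the test function $\mcA_h^{-1}\mcB_h\dot{\bfPhi}$ (or an equivalent device), the proof of part (3) does not go through.
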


\begin{rem}
In case the viscoelastic stresses would be formulated using the same differential operator as the elastic stress, stability of the terms involving $\bfPhi_{VE}^m$ would not be required for proving our error estimates.
\end{rem}

\begin{proof}
\textbf{(i) Conservation law.\ \ }
Choosing $\bfW=\cproj \bfPhi$ in the discrete dual problem \eqref{eq:discdualprob}
and summing over all intervals $\{I_n\}_{i+1}^j$ we get
\begin{align}
0 &= \int_{t_i}^{t_j} -A(\cproj\bfPhi,\dot{\bfPhi}) + B(\cproj\bfPhi,\bfPhi)   \, dt
\\
&= \int_{t_i}^{t_j} -A(\bfPhi,\dot{\bfPhi}) + B(\cproj\bfPhi , \cproj \bfPhi)  \, dt
\\
&= \int_{t_i}^{t_j} -\frac{1}{2}\partial_t A(\bfPhi,\bfPhi) + B_{Sym}(\cproj\bfPhi , \cproj \bfPhi)  \, dt
\end{align}
and the conservation law follows.

\paragraph{(ii) Stability for energy estimate.}
Choosing $t_j=T$ in the conservation law directly yields inequalities
\begin{align}
\sum_{m=1}^M \frac{2}{\tau_m}
\int_{t_i}^T \tn \cproj\bfPhi_{VE}^m \tn_{VE,m}^2 \, dt \leq  \tn \bfPhi(T) \tn_A^2 = \tn \bfPsi \tn_A^2
\end{align}
and
\begin{align} \label{eq:sldkgn}
\tn \bfPhi(t_{i}) \tn_A \leq \tn \bfPhi(T) \tn_A = \tn \bfPsi \tn_A \,, \qquad
\forall i\in\{0,1,\dots,N\}
\end{align}
where the latter stability is valid at the nodal points in time. Utilizing linear interpolation in time, with a nodal basis $\{\phi_i(t)\}_{i=0}^N$, we, by the triangle inequality, can bound the energy norm within a time-interval $I_n$ by its values at the nodes
\begin{align} \label{eq:time-max-start}
\sup_{t \in I_n}
\tn \bfPhi(t) \tn_{A} &= \sup_{t \in I_n} \tn \phi_{n-1}(t) \bfPhi(t_{n-1}) + \phi_{n}(t) \bfPhi(t_{n}) \tn_{A}
\\
&\leq
\sup_{t \in I_n} \bigl(
\phi_{n-1}(t) \tn  \bfPhi(t_{n-1})  \tn_{A}  + \phi_{n}(t) \tn \bfPhi(t_{n}) \tn_{A}
\bigr)
\\
&\leq
\tn  \bfPhi(t_{n-1})  \tn_{A}  + \tn \bfPhi(t_{n}) \tn_{A}
\label{eq:time-max-end}
\end{align}
and hence, via \eqref{eq:sldkgn}, we get the stability
\begin{align}
\| \bfPhi \|^2_{L^\infty(A)}
\lesssim
\tn \bfPsi \tn^2_{A}
\end{align}
For the remaining first term in \eqref{eq:dualstabest}, we look at a single time interval $I_n$. Since $\bfPhi_1$ is a discrete function, we can bound the max norm in time by the $L^1$-norm
\begin{align}
\|  \dot{\bfPhi}_1 \|_{L^\infty(I_n,H^{-1}(\Omega))}
&:=
\sup_{t \in I_n}
\left(
\sup_{\bfzero \neq \bfw \in \Vhnz }
\frac{(  \dot{\bfPhi}_1 , \bfw )}{\| \bfw \|_{H^1(\Omega)}}
\right)
\\
&=
\sup_{\bfzero \neq \bfw \in \Vhnz }
\left(
\sup_{t \in I_n}
\frac{(  \dot{\bfPhi}_1 , \bfw )}{\| \bfw \|_{H^1(\Omega)}}
\right)
\\
&\lesssim
\sup_{\bfzero \neq \bfw \in \Vhnz }
k^{-1}
\int_{I_n}
\frac{(  \dot{\bfPhi}_1 , \bfw )}{\| \bfw \|_{H^1(\Omega)}}
\, dt
\\&\lesssim k^{-1} \label{eq:boidf1}
\sup_{\bfzero \neq \bfw \in \Vhnz }
\frac{\int_{I_n}
( \rho \dot{\bfPhi}_1 , \bfw )
\, dt}{\| \bfw \|_{H^1(\Omega)}}
\\&= k^{-1}  \label{eq:boidf2}
\sup_{\bfzero \neq \bfw \in \Vhnz }
\frac{\int_{I_n}
a_E( \bfPhi_0 , \bfw ) + \sum_{m=1}^M a_{VE}^m( \bfPhi_{VE}^m , \bfw )
\, dt}{\| \bfw \|_{H^1(\Omega)}}
\\&\lesssim k^{-1}  \label{eq:boidf3}
\int_{I_n}
\tn \bfPhi_{0} \tn_E + \sum_{m=1}^M \tn \bfPhi_{VE}^m \tn_{VE,m}
\, dt
\\&\lesssim
\| \bfPhi_{0} \|_{L^\infty(I_n,E)} + \sum_{m=1}^M \| \bfPhi_{VE}^m \|_{L^\infty(I_n,VE)}
\end{align}
where we in \eqref{eq:boidf1} use the lower bound on $\rho > 0$,
in \eqref{eq:boidf2} use the discrete dual problem \eqref{eq:discdualprob},
and in \eqref{eq:boidf3} use the Cauchy--Schwarz inequality.
In turn this inequality gives
\begin{align}
\| \dot{\bfPhi}_1 \|_{L^\infty(H^{-1}(\Omega))}
\lesssim
\| \bfPhi \|^2_{L^\infty(A)}
\end{align}
which concludes the proof of the stability estimate required for the energy estimate.

\paragraph{\boldmath (iii) Stability for $L^2$ estimate.}
We express the discrete the dual problem \eqref{eq:discdualprob} as
\begin{equation}
\int_{I_n}
-( \mcA_h \dot{\bfPhi},\bfW ) + (\mcB_h \bfPhi,\bfW )
\, dt = 0 \,,
\qquad \forall \bfW \in [\Vhnz]^{2+M}
\end{equation}
Choosing $\bfW= \mcA_h^{-1} \mcB_h \dot{\bfPhi}$
and summing over all intervals $\{I_n\}_{i+1}^j$ we obtain
\begin{align}
0&= \int_{t_i}^{t_j}
-(\mcA_h \dot{\bfPhi},\mcA_h^{-1} \mcB_h \dot{\bfPhi}) + (\mcB_h \bfPhi,\mcA_h^{-1} \mcB_h \dot{\bfPhi} )
\, dt
\\
&= \int_{t_i}^{t_j} -(\dot{\bfPhi},\mcB_h \dot{\bfPhi}) + (\mcB_h \bfPhi,\mcA_h^{-1} \mcB_h \dot{\bfPhi} )
\, dt
\\
&= \int_{t_i}^{t_j} -B_{Sym}(\dot{\bfPhi},\dot{\bfPhi}) + (\mcB_h \bfPhi,\mcA_h^{-1} \mcB_h \dot{\bfPhi} )
\, dt
\\
&= \int_{t_i}^{t_j} -B_{Sym}(\dot{\bfPhi},\dot{\bfPhi}) + \frac{1}{2}\partial_t (\mcB_h \bfPhi,\mcA_h^{-1} \mcB_h \bfPhi )
\, dt
\end{align}
which, after integration gives
\begin{equation}
(\mcB_h \bfPhi(t_j),\mcA_h^{-1} \mcB_h {\bfPhi}(t_j) )
- 2 \int_{t_i}^{t_j} B_{Sym}(\dot{\bfPhi},\dot{\bfPhi}) 
= (\mcB_h \bfPhi(t_i),\mcA_h^{-1} \mcB_h {\bfPhi}(t_i) )
\end{equation}
and we have the identity 
\begin{align}
(\mcB_h \bfPhi,\mcA_h^{-1} \mcB_h {\bfPhi})
&= \rho^{-1}\bigl\| \mcL_{h,E} \bfPhi_0 + \sum_{m=1}^M \mcL_{h,VE}^m \bfPhi_{VE}^m  \bigr\|^2_{L^2(\Omega)}
\\&\quad\nonumber
+ \tn \bfPhi_1 \tn_E^2
+ \sum_{m=1}^M \bigl\tn -\bfPhi_1 + \frac{1}{\tau_m}\bfPhi_{VE}^m \bigr\tn^2_{VE,m}
\end{align}
By analogous arguments to the energy norm stability, using the piecewise linear interpolation in time, we
get the following stability
\begin{align}
&\rho^{-1}\bigl\| \mcL_{h,E} \bfPhi_0 + \sum_{m=1}^M \mcL_{h,VE}^m \bfPhi_{VE}^m  \bigr\|^2_{L^\infty(L^2(\Omega))}
+ \| \bfPhi_1 \|_{L^\infty(E)}^2
+ \sum_{m=1}^M \bigl\| -\bfPhi_1 + \frac{1}{\tau_m}\bfPhi_{VE}^m \bigr\|^2_{L^\infty([VE,m])}
\nonumber
\\
&\quad\leq
\rho^{-1}\bigl\| \mcL_{h,E} \bfPsi_0 + \sum_{m=1}^M \mcL_{h,VE}^m \bfPsi_{VE}^m  \bigr\|^2_{L^2(\Omega)}
+ \tn \bfPsi_1 \tn_E^2
+ \sum_{m=1}^M \bigl\tn -\bfPsi_1 + \frac{1}{\tau_m}\bfPsi_{VE}^m \bigr\tn^2_{VE,m}
\end{align}
The final form of the stability estimate comes by choosing $\bfPsi_1=\bfPsi_{VE}^m=\bfzero$ in the right hand side and recognizing that
the first three terms on the left in \eqref{eq:L2dualstabest} can be bounded from below via the following calculations.
Utilizing the discrete dual problem \eqref{eq:discdualprob} and the Cauchy--Schwarz inequality, we for the first term in the stability estimate have
\begin{align}
\| \dot{\bfPhi}_1 \|_{L^\infty(I_n,L^2(\Omega))}^2
&\lesssim
k^{-1} \int_{I_n}
(\rho \dot{\bfPhi}_1,\dot{\bfPhi}_1) \, dt
\\
&=
k^{-1} \int_{I_n}
(-\mcL_{h,E} \bfPhi_0 - \sum_{m=1}^M \mcL_{h,VE}^m \bfPhi_{VE}^m, \dot{\bfPhi}_1)
\, dt
\\
&\leq
k^{-1} \int_{I_n}
\bigl\| \mcL_{h,E} \bfPhi_0 + \sum_{m=1}^M \mcL_{h,VE}^m \bfPhi_{VE}^m \bigr\|_{L^2(\Omega)}
\| \dot{\bfPhi}_1 \|_{L^2(\Omega)}
\, dt
\\
&\leq
\bigl\| \mcL_{h,E} \bfPhi_0 + \sum_{m=1}^M \mcL_{h,VE}^m \bfPhi_{VE}^m \bigr\|_{L^\infty(I_n,L^2(\Omega))}
\| \dot{\bfPhi}_1 \|_{L^\infty(I_n,L^2(\Omega))}
\end{align}
which concludes the proof of the stability estimate.
\end{proof}

We now turn to the proof of our main a priori error estimates.
\subsection{A Priori Error Estimate}
\begin{thm}[End Time Error Bounds] \label{eq:energyestimate}
Assuming $\bfg_k:=\pi_k\bfg$, and that the velocities of the exact solution $\dot{\bfu}_0(t) = \bfu_1(t) \in [H^r(\Omega)]^3$ at all times $t\in (0,T]$, the following end time a priori error estimates hold. In the energy norm, we have the estimate
\begin{align}
\boxed{
\tn \bfu(T) - \bfU(T) \tn_A
\lesssim T
\bigl(
h^{s-1} \|\bfu_1 \|_{L^\infty(H^s(\Omega))}
+
k^2 \| \partial_{tt} \bff \|_{L^\infty(H^1(\Omega))}
\bigr)
}
\end{align}
and for displacements in $L^2$ norm we have the estimate
\begin{align}
\boxed{
\| \bfu_0(T)-\bfU_0(T) \|_{L^2(\Omega)}
\lesssim T
\bigl(
(h^{s}+kh^{s-1}) \|\bfu_1 \|_{L^\infty(H^s(\Omega))}
+ k^2 \| \partial_{tt} \bff \|_{L^\infty(L^{2}(\Omega))}
\bigr)
}
\end{align}
where $s=\min(r,p+1)$ and the constant in $\lesssim$ for each estimate is independent of mesh size $h$, timestep $k$, and end time $T$.
\end{thm}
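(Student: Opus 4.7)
The plan is to split the error as $\bfu - \bfU = \bfrho + \bfeta$, with $\bfrho := \bfu - \pi_{hk}\bfu$ and $\bfeta := \pi_{hk}\bfu - \bfU$, bound $\bfrho$ at $T$ directly using Lemma~\ref{lemma:interpolation} together with the time interpolation bound \eqref{eq:interp_pik}, and control $\bfeta(T)$ using the error representation formula \eqref{eq:errorrep} with a carefully chosen terminal datum $\bfPsi$. Choosing the discrete initial state $\bfU(0) := \pih\bfu(0)$ forces $\bfeta(0)=\bfzero$, so the first term on the right of \eqref{eq:errorrep} drops. The triangle inequality then reduces each estimate to bounding $\bfeta(T)$ in the relevant norm.

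\textbf{Energy estimate.} I would set $\bfPsi := \bfeta(T)$, turning the left-hand side of \eqref{eq:errorrep} into $\tn \bfeta(T) \tn_A^2$. Each term on the right is then bounded by Cauchy--Schwarz in space-time against the dual solution, invoking the stability estimate \eqref{eq:dualstabest}, which controls $\|\bfPhi\|_{L^\infty(A)}$, $\|\dot{\bfPhi}_1\|_{L^\infty(H^{-1}(\Omega))}$, and the integrated viscoelastic dissipation $\sum_m \frac{2}{\tau_m}\int_0^T \tn \cproj\bfPhi_{VE}^m \tn_{VE,m}^2\,dt$ by $\tn\bfPsi\tn_A = \tn\bfeta(T)\tn_A$. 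The linear $T$-dependence emerges from estimates of the form $\int_0^T X\,dt \leq T\|X\|_{L^\infty}$ once the integrand has been bounded in $L^\infty$-in-time, avoiding any Gr\"onwall step. The Neumann discrepancy $(\bfg-\pi_k\bfg,\cproj\bfPhi_1)_{\Gamma_N}$ is $O(k^2\|\ddot\bfg\|_{L^\infty})$ by \eqref{eq:interp_pik}, and can be absorbed into the $k^2\|\partial_{tt}\bff\|$ contribution via the momentum equation. Dividing by $\tn\bfeta(T)\tn_A$ produces the stated energy bound.

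\textbf{$L^2$ estimate.} I would choose $\bfPsi$ with $\bfPsi_1 = \bfPsi_{VE}^m = \bfzero$ and $\bfPsi_0 \in \Vh$ determined by $\mcL_{h,E}\bfPsi_0 = \bfeta_0(T)$, which is well-posed by \eqref{eq:discrete-operator-E}. Then
\begin{equation*}
A(\bfeta(T),\bfPsi) = a_E(\bfeta_0(T),\bfPsi_0) = (\mcL_{h,E}\bfPsi_0,\bfeta_0(T)) = \|\bfeta_0(T)\|_{L^2(\Omega)}^2,
\end{equation*}
and the right-hand side of \eqref{eq:L2dualstabest} equals $\|\bfeta_0(T)\|_{L^2(\Omega)}^2$. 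Bounding the integrals in \eqref{eq:errorrep} now uses the four $L^\infty$-in-time controls in \eqref{eq:L2dualstabest}, pairing $\bfPhi_1$ with contributions tested in the energy norm, $\dot\bfPhi_1$ with those tested in $L^2$, the composite $\mcL_{h,E}\bfPhi_0 + \sum_m\mcL_{h,VE}^m\bfPhi_{VE}^m$ with the stress-like contributions, and $-\bfPhi_1 + \tau_m^{-1}\bfPhi_{VE}^m$ with the viscoelastic residual. Each pairing gains one extra power of $h$ or $k$ compared with the energy estimate, producing the $h^s+kh^{s-1}$ scaling. After cancelling one factor of $\|\bfeta_0(T)\|_{L^2}$ and adding the $L^2$ interpolation bound for $\bfrho_0(T)$, the stated estimate follows.

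\textbf{Main obstacle.} The delicate step in both estimates is the $A(\dot{\bfrho},\cproj\bfPhi)$ term, since $\dot{\bfrho}$ couples the time derivative of the temporal interpolation error with the spatial Ritz residual, and a naive Cauchy--Schwarz either loses a factor $k$ or forces Gr\"onwall and hence exponential $T$-dependence. My plan is to write $\cproj\bfPhi = \bfPhi - (\bfPhi - \cproj\bfPhi)$ and treat the two pieces separately: on the first piece, integrate by parts in time to move $\partial_t$ from $\bfrho$ onto $\bfPhi$, then exploit the $H^{-1}$ (respectively $L^2$) stability of $\dot\bfPhi_1$ together with the Ritz orthogonalities built into $\pih$; on the second piece, use the $L^\infty$-in-time estimate $\|\bfPhi - \cproj\bfPhi\|_{L^\infty} \lesssim k\|\dot\bfPhi\|_{L^\infty}$. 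Using the momentum equation \eqref{eq:strong1} to rewrite $\ddot\bfu_1$ then converts temporal derivatives of the exact velocity into $\partial_{tt}\bff$ plus spatial-derivative terms that are already absorbed into the $h$-contribution, producing exactly the regularity norms appearing in the statement and preserving the linear, rather than exponential, dependence on $T$.
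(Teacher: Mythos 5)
Your skeleton matches the paper's proof exactly: the same splitting $\bfu-\bfU=\bfrho+\bfeta$, the same choices $\bfPsi=\bfeta(T)$ and $\bfPsi=(\bfzero,\mcL_{h,E}^{-1}\bfeta_0(T),\bfzero)$ in the error representation formula, and the same use of the two dual stability estimates. Your treatment of $A(\dot{\bfrho},\cproj\bfPhi)$ by summation by parts in time, producing jump terms controlled by $k\|\dot{\bfPhi}_1\|$, is also the paper's argument, and your remark that the Neumann term cancels (rather than merely being bounded) is essentially right, though the mechanism is an exact cancellation of the $\bfg$-terms after combining $II_1+II_3+III$ and invoking \eqref{eq:strong1} to produce $(I-\pi_k)\bff$; a direct Cauchy--Schwarz bound would leave a $k^2\|\ddot{\bfg}\|$ term that does not appear in the theorem.

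The genuine gap is that you have not identified the term that actually drives the difficulty of this paper: the interpolant uses the \emph{elastic} Ritz projection $R_E$ on the velocity component $\bfu_1$, but $\bfu_1$ is also tested against the \emph{viscoelastic} forms $a_{VE}^m(\cdot,\cdot)$ in $B(\bfrho,\cproj\bfPhi)$. The mismatch leaves the nonvanishing residual $\sum_m\int_0^T a_{VE}^m((I-R_E)\bfu_1,\cproj\bfPhi_{VE}^m)\,dt$ (the paper's $II_4+II_5$). For the energy estimate this can still be closed, because the dual conservation law supplies control of $\sum_m\frac{2}{\tau_m}\int_0^T\tn\cproj\bfPhi_{VE}^m\tn_{VE,m}^2\,dt$ by $\tn\bfPsi\tn_A^2$; you list this stability but never point it at the term that needs it. For the $L^2$ estimate your plan breaks down: the stability \eqref{eq:L2dualstabest} controls only the \emph{combination} $\mcL_{h,E}\bfPhi_0+\sum_m\mcL_{h,VE}^m\bfPhi_{VE}^m$, not $\cproj\bfPhi_{VE}^m$ separately, so the generic ``pairing'' you describe has nothing to pair this residual with. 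The paper needs the entire Appendix~\ref{sec:appendix} here: expanding $\bfPhi_{VE}$ in terms of $\bfPhi_0$ through the dual ODE, adding a term that vanishes by the $R_E$-orthogonality to build the modified operator $\bfsig_\bigstar$, a discrete elliptic regularity estimate \eqref{eq:disc-elliptic-reg}, and a separate stability bound \eqref{eq:star-final-stability} that costs the factor $(1+kh^{-1})$. This is precisely the origin of the $kh^{s-1}$ term in the $L^2$ bound, which your proposal asserts but cannot produce.
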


\begin{rem}
Through some adjustments to the proof,
this theorem can be extended from bounds of the errors at the end time $T$ to bounds on the maximum of the errors at each nodal point in time.
\end{rem}

\begin{rem}
We explicitly state the dependence on the end time $T$ in the inequality to emphasize that this dependence is only linear. This is in contrast to some previous results cited in the introduction, where the constants are
exponentially dependent on $T$ due to the use of Grönwall type inequalities.
\end{rem}

\begin{rem}
The $k h^{s-1}$-term in the $L^2$ estimate is of optimal order if $k\sim h$. This term vanishes if the viscoelastic stresses would be formulated in terms of the same differential operator as the elastic stress.
\end{rem}

\begin{proof}
The proofs of the energy estimate and the $L^2$ estimate are presented in parallel, term by term, with the fundamental difference being which stability estimate is used.
\paragraph{Energy Estimate.}
Adding and subtracting terms in combination with the triangle inequality gives
\begin{align}
\tn \bfu(T) - \bfU(T) \tn_A
&\leq
\tn (\bfu - \pi_{hk}\bfu)(T) \tn_A
+
\tn (\pi_{hk}\bfu - \bfU)(T) \tn_A
\\&=
\tn (I - \pi_{h}) \bfu(T) \tn_A
+
\tn \bfeta(T) \tn_A
\end{align}
where the first term is limited through the interpolation estimate in Lemma~\ref{lemma:interpolation}.
For the second term we
choose $\bfPsi = \bfeta(T)$ in the error representation 
formula \eqref{eq:errorrep} which gives
\begin{align}
\tn \bfeta(T) \tn_A^2 &=
A(\bfeta(T),\bfPsi)
\\&=  \underbrace{A(\bfeta(0),\bfPhi(0))}_{=0} 
-
\underbrace{\smashoperator[r]{\int_0^T} A(\dot{\bfrho},\cproj \bfPhi) +  B(\bfrho,\cproj \bfPhi)
 - (\bfg - \bfg_k, P_0\bfPhi_1)_{\Gamma_N} \, dt}_{\bigstar}
\end{align}
where the first term is zero due to the definition of initial 
data $\bfeta(0)$.

\paragraph{\boldmath $L^2$ Estimate.}
Adding and subtracting terms and applying the triangle inequality gives
\begin{align}
\| \bfu_0(T) - \bfU_0(T) \|_{L^2(\Omega)}
&\leq
\| (I - R_{E}) \bfu_0(T) \|_{L^2(\Omega)}
+
\| \bfeta_0(T) \|_{L^2(\Omega)}
\end{align}
where the first term is limited through a standard $L^2$ error estimate for linear elastostatics.
For the second term we choose
$\bfPsi = (\bfPsi_1,\bfPsi_0,\bfPsi_{VE})= (\bfzero,\mcL_{h,E}^{-1}\bfeta_0,\bfzero)$ in the error representation formula \eqref{eq:errorrep} which gives
\begin{align}
\| \bfeta_0(T) \|_{L^2(\Omega)}^2
&=
A(\bfeta(T),\bfPsi)
\\&=  \underbrace{A(\bfeta(0),\bfPhi(0))}_{=0}
- \underbrace{\smashoperator[r]{\int_0^T} A(\dot{\bfrho},\cproj \bfPhi) +  B(\bfrho,\cproj \bfPhi)
 - (\bfg - \bfg_k, P_0\bfPhi_1)_{\Gamma_N} \, dt}_{\bigstar}
\end{align}
where the first term is zero due to the definition of initial 
data $\bfeta(0)$.
Note that the expression on the right is the same as in the energy estimate.

\paragraph{Terms in $\bigstar$.} We decompose $\bigstar$ into a sum of the following three terms
\begin{align}
I = \int_0^T A(\dot{\bfrho},\cproj \bfPhi) \, dt
,\quad
II = \int_0^T B(\bfrho,\cproj \bfPhi) \, dt
,\quad
III = -\int_0^T (\bfg - \bfg_k, P_0\bfPhi_1)_{\Gamma_N} \, dt
\end{align}


\paragraph{\boldmath Term $I$.} We have the identity
\begin{align}
\int_0^T A(\dot{\bfrho},P_0 \bfPhi) \, dt
&= \label{eq:termI}
\sum_{n=1}^N \int_{I_n} (\rho \dot{\bfrho}_1,P_0 \bfPhi_1) \, dt
\\&\qquad\quad +  \underbrace{\int_{I_n} a_E(\dot{\bfrho}_0,P_0 \bfPhi_0)}_{=0} \, dt
+ \sum_{m=1}^M 
\underbrace{\int_{I_n} 
a_{VE}^m( \dot{\bfrho}_{VE}^m,P_0 \bfPhi^m_{VE})}_{=0} \, dt
\nonumber
\end{align}
where the second term is zero since
\begin{equation}
\int_{I_n} a_E(\dot{\bfrho}_0,P_0 \bfPhi_0) \, dt =  
a_E(\bfrho_0(t_n),P_0 \bfPhi_0(t_n)^-) 
- a_E(\bfrho_0(t_{n-1}),P_0 \bfPhi_0(t_{n-1})^+)
=0
\label{eq:dogij}
\end{equation}
due to the definition of the interpolant using Ritz projections.
We can argue in the same way for the third term. 
For the remaining first term in \eqref{eq:termI} we have
\begin{align}
\sum_{n=1}^{N} \int_{I_n} (\rho\dot{\bfrho}_1,P_0 \bfPhi_1) 
&=-(\rho\bfrho_1(0),\bfPhi_1 (0) ) + (\rho\bfrho_1(0),(I - P_0)\bfPhi_1 (0) )
\\ \nonumber
&\qquad + (\rho\bfrho_1(T), \bfPhi_1 (T) ) - (\rho\bfrho_1(T),(I-P_0) \bfPhi_1 (T))
\\ \nonumber
&\qquad  
\underbrace{- \sum_{n=1}^{N-1} (\rho\bfrho_1(t_n),[P_0 \bfPhi_1]_n)_{\Omega}}_{I_3}
\\
&=I_1 + I_2 + I_3
\end{align}

\paragraph{\boldmath Terms $I_1$ and $I_2$: Energy Estimate.} Using Hölder's inequality we have 
\begin{align}
|I_1|&= |(\rho\bfrho_1(0),\bfPhi_1 (0) ) + (\rho\bfrho_1(0),(I - P_0)\bfPhi_1 (0) )|
\\
&\lesssim 
\|\bfrho_1(0)\|_{L^2(\Omega)} \|\bfPhi_1 (0)\|_{L^2(\Omega)} 
+ \|\bfrho_1(0)\|_{H^1(\Omega)} \|(I - P_0)\bfPhi_1 (0) )\|_{H^{-1}(\Omega)}
\\
&\lesssim 
h^{s}\| \bfu_1(0)\|_{H^s(\Omega)} \|\bfPhi_1 \|_{L^\infty(I_1,L^2(\Omega))} 
+ h^{s-1} k \|\bfu_1(0)\|_{H^s(\Omega)} \|\dot{\bfPhi}_1\|_{L^\infty(I_1,H^{-1}(\Omega))}
\end{align}
and $I_2$ is estimated in the same way.

\paragraph{\boldmath Terms $I_1$ and $I_2$: $L^2$ Estimate.} Using Hölder's inequality followed by Korn's inequality we have  
\begin{align}
|I_1|&= |(\rho\bfrho(0),\bfPhi_1 (0) ) + (\rho\bfrho(0),(I - P_0)\bfPhi_1 (0) )|
\\
&\lesssim 
\underbrace{\|\bfrho_1(0)\|_{H^{-1}(\Omega)}}_{\leq \|\bfrho_1(0)\|_{L^{2}(\Omega)} }
\underbrace{\|\bfPhi_1 (0)\|_{H^1(\Omega)}}_{\lesssim \tn\bfPhi_1 (0)\tn_E }
+ \|\bfrho_1(0)\|_{L^2(\Omega)} \|(I - P_0)\bfPhi_1 (0) )\|_{L^2(\Omega)}
\\
&\lesssim 
h^{s} \| \bfu_1(0)\|_{H^s(\Omega)} \|\bfPhi_1 \|_{L^\infty(I_1,E)} 
+ h^{s} \|\bfu_1(0)\|_{H^s(\Omega)} k \|\dot{\bfPhi}_1\|_{L^\infty(I_1,L^2(\Omega))}
\end{align}
and $I_2$ is estimated in the same way.

\paragraph{\boldmath Terms $I_3$: Energy Estimate.} Using Hölder's inequality, we have
\begin{align}
| I_3 | &\lesssim \sum_{n=1}^{N-1} \|\bfrho_1(t_n)\|_{H^1(\Omega)}
\bigl\|[P_0 \bfPhi_1]_n\bigr\|_{H^{-1}(\Omega)}
\\
&\lesssim \sum_{n=1}^{N-1} k h^{s-1} \|\bfu_1(t_n) \|_{H^s(\Omega)}
\| \dot{\bfPhi}_1 \|_{L^\infty(I,H^{-1}(\Omega))}
\\
&\lesssim h^{s-1} \left(\sum_{n=1}^{N-1} k \|\bfu_1(t_n) \|_{H^s(\Omega)}\right)
\| \dot{\bfPhi}_1 \|_{L^\infty(I,H^{-1}(\Omega))}
\end{align}
where we used the estimate 
\begin{align}
\|[P_0 \bfPhi_1]_n\|_{H^{-1}(\Omega)}
&\leq 
\|((I-P_0)\bfPhi_1)^+_n\|_{H^{-1}(\Omega)}
+ \|((I-P_0) \bfPhi_1)^-_n\|_{H^{-1}(\Omega)}
\\
&\lesssim 
k \|\dot{\bfPhi}_1\|_{L^\infty(I_{n+1},H^{-1}(\Omega))}
+ k \|\dot{\bfPhi}_1\|_{L^\infty (I_n, H^{-1}(\Omega))}
\end{align}

\paragraph{\boldmath Terms $I_3$: $L^2$ Estimate.} 
 Using Hölder's inequality, we have
\begin{align}
| I_3 | &\lesssim \sum_{n=1}^{N-1} \|\bfrho_1(t_n)\|_{L^2(\Omega)}
\bigl\|[P_0 \bfPhi_1]_n \bigr\|_{L^2(\Omega)}
\\
&\lesssim \sum_{n=1}^{N-1} k h^{s} \|\bfu_1(t_n) \|_{H^s(\Omega)}
\| \dot{\bfPhi}_1 \|_{L^\infty(I,L^2(\Omega))}
\\
&\lesssim h^{s} \left(\sum_{n=1}^{N-1} k \|\bfu_1(t_n) \|_{H^s(\Omega)}\right)
\| \dot{\bfPhi}_1 \|_{L^\infty(I,L^2(\Omega))}
\end{align}
where we used the estimate 
\begin{align}
\|[P_0 \bfPhi_1]_n\|_{L^2(\Omega)}
&\leq 
\|((I-P_0)\bfPhi_1)^+_n\|_{L^2(\Omega)}
+ \|((I-P_0) \bfPhi_1)^-_n\|_{L^2(\Omega)}
\\
&\lesssim 
k \|\dot{\bfPhi}_1\|_{L^\infty(I_{n+1},L^2(\Omega))}
+ k \|\dot{\bfPhi}_1\|_{L^\infty (I_n, L^2(\Omega))}
\end{align}

\paragraph{\boldmath Final Estimates Term $I$: Energy Estimate.}
Collecting the estimates we have
\begin{align}
|I| &\lesssim h^{s} \Big( \|\bfu_1(0) \|_{H^s(\Omega)} 
+ \|\bfu_1(T) \|_{H^s(\Omega)} \Big) 
\|\bfPhi_1\|_{L^\infty(I,L^2(\Omega))}
\\ \nonumber
&\qquad + h^{s-1} \, T \max_{0 \leq n \leq N} \|\bfu_1(t_n) \|_{H^s(\Omega)}
\| \dot{\bfPhi}_1 \|_{L^\infty(I,H^{-1}(\Omega))}
\end{align}

\paragraph{\boldmath Final Estimates Term $I$: $L^2$ Estimate.}
Collecting the estimates we have
\begin{align}
|I| &\lesssim h^{s} \Big( \|\bfu_1(0) \|_{H^s(\Omega)} 
+ \|\bfu_1(T) \|_{H^s(\Omega)} \Big) 
\|\bfPhi_1\|_{L^\infty(I,H^1(\Omega))}
\\ \nonumber
&\qquad + h^{s} \, T \max_{0 \leq n \leq N} \|\bfu_1(t_n) \|_{H^s(\Omega)}
\| \dot{\bfPhi}_1 \|_{L^\infty(I,L^2(\Omega))}
\end{align}


\paragraph{\boldmath Term $II$.} We have the identity 
\begin{align}
\int_0^T  B( {\bfrho},P_0 \bfPhi) \, dt
&=\int_0^T {a_E(\bfrho_0,P_0 \bfPhi_1)}   \, dt
- \int_0^T  {a_E(\bfrho_1,P_0 \bfPhi_0)}  \, dt
\\ \nonumber
&\qquad + \sum_{m=1}^M \int_0^T  
a_{VE}^m(\bfrho_{VE}^m,P_0\bfPhi_1)
- a_{VE}^m(\bfrho_1,P_0\bfPhi^m_{VE})  \, dt
\\ \nonumber
&\qquad +\sum_{m=1}^M \frac{1}{\tau_m}\int_0^T 
a_{VE}^m( \bfrho_{VE}^m,P_0\bfPhi^m_{VE})  \, dt
\\
&=\sum_{i=1}^5 II_i
\end{align}

For the remaining terms, we have the following estimates.
\paragraph{\boldmath Terms $II_1$, $II_3$ and $III$.} For term $II_1$, we by the definition of the Ritz projection have
\begin{align}
II_1
&= 
\int_0^T a_E((I - \pi_k R_E) \bfu_0 ,P_0 \bfPhi_1) \, dt
\\
&= 
\int_0^T \underbrace{a_E((I - R_E) \bfu_0 ,P_0 \bfPhi_1)}_{=0} \, dt
+ 
\int_0^T a_E(R_E (I- \pi_k) \bfu_0 ,P_0 \bfPhi_1) \, dt
\\
&= 
\int_0^T a_E( (I- \pi_k) \bfu_0 ,P_0 \bfPhi_1) \, dt
\end{align}
and the corresponding calculation for term $II_3$ yields
\begin{align}
II_3
&= 
\int_0^T \sum_{m=1}^M
a_{VE}^m((I - \pi_k R_{VE}) \bfu_{VE}^m ,P_0 \bfPhi_1) \, dt
\\
&= 
\int_0^T \sum_{m=1}^M
a_{VE}^m( (I- \pi_k) \bfu_{VE}^m ,P_0 \bfPhi_1) \, dt
\end{align}
Recalling the identity
\begin{align}
(\bfsig(\bfv_0,\bfv_{VE}), \bfeps(\bfw)) = a_E( \bfv_0 , \bfw ) + \sum_{m=1}^M
a_{VE}^m( \bfv_{VE}^m ,\bfw)
\end{align}
and that $\bfsig$ is a linear operator,
we after integration by parts have
\begin{align}
II_1 + II_3 + III &=
\int_0^T  ((I- \pi_k)\bfg, P_0 \bfPhi_1)_{\Gamma_N} \, dt
\\&\quad - \nonumber
\int_{0}^T  ( (I- \pi_k)(\bfsig(\bfu_0,\bfu_{VE}) \cdot \nabla), P_0 \bfPhi_1 ) \, dt
\\&\quad \nonumber
- \int_0^T (\bfg - \bfg_k, P_0 \bfPhi_1)_{\Gamma_N} \, dt
\end{align}
Note that the Neumann boundary terms vanish by choosing $\bfg_k:=\pi_k\bfg$.
Adding
\begin{align} \label{eq:zeroterm}
0 = \int_{I_n} (\partial_t (I - \pi_k)\rho\bfu_1, \cproj\bfPhi_1 ) \, dt =
\int_{I_n} ((I - \pi_k)\rho\dot{\bfu}_1, \cproj\bfPhi_1 ) \, dt
\end{align}
which is zero by the same arguments as \eqref{eq:dogij} and where the second equality holds as a consequence of
the definition of the time interpolant $\pi_k$ by the following calculation
\begin{align}
\int_{I_n} \partial_t \pi_k \bfv \, dt
&=
\pi_k \bfv(t_n) - \pi_k \bfv(t_{n-1})
=
\bfv(t_n) - \bfv(t_{n-1})
=
\int_{I_n} \partial_t \bfv \, dt
\label{eq:dotrho}
\end{align}
Assuming $\bfg_k:=\pi_k\bfg$ we thus have
\begin{align}
II_1 + II_3 + III
&=
\int_0^T  ((I- \pi_k)(\underbrace{\rho\dot{\bfu}_1 - \bfsig(\bfu_0,\bfu_{VE}) \cdot \nabla}_{=\bff \text{ by \eqref{eq:strong1}}}), P_0 \bfPhi_1 ) \, dt
\\&=
\int_0^T ((I- \pi_k)\bff, \bfPhi_1 ) \, dt + \int_0^T ((I- \pi_k)\bff, (P_0-I) \bfPhi_1 ) \, dt
\end{align}

\paragraph{\boldmath Final Estimates Term $(II_1 + II_3 + III)$: Energy Estimate.}
By the Cauchy--Schwarz and Hölder's inequalities, we have
\begin{align}
| II_1 + II_3 + III |&\lesssim
T \| (I- \pi_k) \bff \|_{L^\infty(L^2(\Omega))}
\| \bfPhi_1 \|_{L^\infty(L^2(\Omega))}
\\&\quad+ \nonumber
T \| (I- \pi_k) \bff \|_{L^\infty(H^1(\Omega))}
\| (I-\cproj) \bfPhi_1 \|_{L^\infty(H^{-1}(\Omega))}
\\&\lesssim
k^2  T \| \partial_{tt} \bff \|_{L^\infty(H^1(\Omega))}
\left(
\| \bfPhi_1 \|_{L^\infty(L^2(\Omega))}
+
k \| \dot{\bfPhi}_1 \|_{L^\infty(H^{-1}(\Omega))}
\right)
\end{align}

\paragraph{\boldmath Final Estimates Term $(II_1 + II_3 + III)$:  $L^2$ Estimate.}
By Hölder's inequality and the Cauchy-Schwarz inequality, we have
\begin{align}
| II_1 + II_3 + III | &\lesssim
T\| (I- \pi_k) \bff \|_{L^\infty(H^{-1}(\Omega))}
\| \bfPhi_1 \|_{L^\infty(H^1(\Omega))}
\\&\quad+ \nonumber
T\| (I- \pi_k) \bff \|_{L^\infty(L^2(\Omega))}
\| (I - \cproj) \bfPhi_1 \|_{L^\infty(L^2(\Omega))}
\\&\lesssim
k^2 T \| \partial_{tt} \bff \|_{L^\infty(L^2(\Omega))}
\left(
\| \bfPhi_1 \|_{L^\infty(E)}
+
k \| \dot{\bfPhi}_1 \|_{L^\infty(L^2(\Omega))}
\right)
\end{align}

\paragraph{\boldmath Term $II_2$.} This term is zero by the following calculation
\begin{align}
\int_{I_n} a_E(\bfrho_1,P_0 \bfPhi_0) \, dt
&= 
\int_{I_n} a_E((I - \pi_k R_E) \bfu_1 ,P_0 \bfPhi_0) \, dt
\\
&= 
\int_{I_n} a_E( (I- \pi_k) \bfu_1 ,P_0 \bfPhi_0) \, dt
\\
&= \label{eq:obduf}
\int_{I_n} a_E( (I- \pi_k) \bfu_1 ,P_0 \bfPhi_0) \, dt
-
\underbrace{
\int_{I_n} a_E( (I- \pi_k) \dot{\bfu}_0 ,P_0 \bfPhi_0) \, dt
}_{\text{$=0$ analogously to \eqref{eq:zeroterm}}}
\\
&= 
\int_{I_n} 
a_E( (I- \pi_k) \underbrace{( \bfu_1 - \dot{\bfu}_0)}_{=0 \text{ by \eqref{eq:strong2}}} , P_0 \bfPhi_0 ) \, dt
=0
\end{align}
where we, in the first equality, used the definition of the Ritz projection.

\paragraph{Terms $\bfI\bfI_4$ and $\bfI\bfI_5$.}
First looking at $II_4$, we have terms
\begin{align}
-\int_{I_n} a_{VE}^m(\bfrho_1, P_0 \bfPhi_{VE}^m) \, dt
&=
-\int_{I_n} a_{VE}^m((I - \pi_k R_{E})\bfu_1, P_0 \bfPhi_{VE}^m) \, dt
\\&=
-\int_{I_n} a_{VE}^m((I - \pi_k)\bfu_1, P_0 \bfPhi_{VE}^m) \, dt
\label{eq:ibduff}
\\&\quad \nonumber
-\int_{I_n} a_{VE}^m(\pi_k(I-R_E)\bfu_1, P_0 \bfPhi_{VE}^m) \, dt
\end{align}
In this case, the last term is not zero by the definition of the Ritz projections since $R_E$ is applied within a viscoelastic form.
Correspondingly, for $II_5$ we have
\begin{align}
\frac{1}{\tau_m} 
\int_{I_n}
a_{VE}^m( \bfrho_{VE}^m,P_0\bfPhi^m_{VE}) \, dt
&=
\frac{1}{\tau_m}
\int_{I_n} 
a_{VE}^m( (I - \pi_k R_{VE})\bfu_{VE}^m,P_0\bfPhi^m_{VE}) \, dt
\\&=
\frac{1}{\tau_m}
\int_{I_n} 
a_{VE}^m( (I - \pi_k)\bfu_{VE}^m,P_0\bfPhi^m_{VE}) \, dt
\label{eq:lkjsgn}
\end{align}
Pairing \eqref{eq:lkjsgn} with the first term in \eqref{eq:ibduff}, we see that the resulting term vanishes by the following calculation
\begin{align}
&\frac{1}{\tau_m}\int_{I_n} 
a_{VE}^m( (I - \pi_k)\bfu_{VE}^m,P_0\bfPhi^m_{VE}) \, dt
-\int_{I_n} a_{VE}^m((I - \pi_k)\bfu_1, P_0 \bfPhi_{VE}^m) \, dt
\\&\qquad=
\underbrace{\int_{I_n} a_{VE}^m((I - \pi_k)\dot{\bfu}_{VE}^m, P_0 \bfPhi_{VE}^m) \, dt}_{\text{$=0$ analogously to \eqref{eq:zeroterm}}}
\\&\nonumber\qquad\qquad
+
 \frac{1}{\tau_m}\int_{I_n} 
a_{VE}^m( (I - \pi_k)\bfu_{VE}^m,P_0\bfPhi^m_{VE}) \, dt
-\int_{I_n} a_{VE}^m((I - \pi_k)\bfu_1, P_0 \bfPhi_{VE}^m) \, dt
\\&\qquad=
\int_{I_n} a_{VE}^m \bigl((I - \pi_k)\underbrace{\bigl( \bfu_1 + \frac{1}{\tau_m}\bfu_{VE}^m- \dot{\bfu}_{VE}^m \bigr)}_{\text{$=0$ by \eqref{eq:strong3}}}, P_0 \bfPhi_{VE}^m \bigr) \, dt
= 0
\end{align}
Thus, $II_4+II_5$ is reduced to
\begin{align}
II_4+II_5
&=
\sum_{m=1}^M \int_{0}^T a_{VE}^m(\pi_k(I-R_E)\bfu_1, \cproj \bfPhi_{VE}^m) \, dt
\\
&=
\sum_{m=1}^M \int_{0}^T a_{VE}^m((I-R_E)\bfu_1, \cproj \bfPhi_{VE}^m) \, dt
\\\nonumber&\qquad\quad
-\int_{0}^T a_{VE}^m((I-\pi_k)(I-R_E)\bfu_1, \cproj \bfPhi_{VE}^m) \, dt
\\\nonumber&\qquad\quad
+\underbrace{\int_{0}^T a_{VE}^m((I-\pi_k)\partial_t(I-R_E)\bfu_0, \cproj \bfPhi_{VE}^m) \, dt}_{\text{$=0$ analogously to \eqref{eq:zeroterm}}}
\\ \label{eq:fbjbba}
&=
\sum_{m=1}^M \int_{0}^T a_{VE}^m((I-R_E)\bfu_1, \cproj \bfPhi_{VE}^m) \, dt
\\\nonumber&\qquad\quad
-\int_{0}^T a_{VE}^m((I-\pi_k)(I-R_E)(\underbrace{\bfu_1-\dot{\bfu}_0}_{\mathclap{=0 \text{ by \eqref{eq:strong2}}}}), \cproj \bfPhi_{VE}^m) \, dt
\\&=
\sum_{m=1}^M \int_{0}^T a_{VE}^m((I-R_E)\bfu_1, \cproj \bfPhi_{VE}^m) \, dt
\label{eq:fbjbbb}
\end{align}
where we in \eqref{eq:fbjbba} use that the Ritz projection $R_E$ commutes with the time derivative since the domain $\Omega$ does not change.

Note that in case the viscoelastic stresses would be expressed in terms of the same differential operator as the elastic stress, the term $II_4+II_5$ would vanish by the definition of the Ritz projection $R_E$.

\paragraph{\boldmath Final Estimates Term $(II_4 + II_5)$: Energy Estimate.}
Starting from \eqref{eq:fbjbbb}, using the triangle inequality, and applying the Cauchy-Schwarz inequality in space, and then in time, we have
\begin{align}
| II_4 + II_5 | &=
\biggl|\sum_{m=1}^M \int_{0}^T a_{VE}^m((I-R_E)\bfu_1, \cproj \bfPhi_{VE}^m) \, dt \biggr|
\\&\leq
\sum_{m=1}^M \int_{0}^T \underbrace{\tn (I-R_E)\bfu_1 \tn_{VE,m}}_{\lesssim \kappa_m \| (I-R_E)\bfu_1 \|_{H^1(\Omega)}} \tn \cproj \bfPhi_{VE}^m \tn_{VE,m} \, dt
\\&\lesssim
\int_{0}^T \| (I-R_E)\bfu_1 \|_{H^1(\Omega)}
\left( \sum_{m=1}^M \kappa_m \tau_m \right)^{1/2}
\left( \sum_{m=1}^M \frac{2}{\tau_m} \tn \cproj \bfPhi_{VE}^m \tn_{VE,m}^2\right)^{1/2} \, dt
\\&\lesssim
\int_{0}^T h^{s-1} \| \bfu_1 \|_{H^s(\Omega)}
\left( \sum_{m=1}^M \frac{2}{\tau_m} \tn \cproj \bfPhi_{VE}^m \tn_{VE,m}^2\right)^{1/2} \, dt
\\&\lesssim
h^{s-1}
\left( \int_{0}^T \| \bfu_1 \|_{H^s(\Omega)}^2 \, dt \right)^{1/2}
\left( \sum_{m=1}^M \frac{2}{\tau_m} \int_{0}^T \tn \cproj \bfPhi_{VE}^m \tn_{VE,m}^2 \, dt \right)^{1/2}
\\&\lesssim
h^{s-1} \sqrt{T}
\| \bfu_1 \|_{L^\infty(H^s(\Omega))}
\tn \bfPsi \tn_A
\end{align}
where we note that $\sqrt{T}\leq T$ assuming $T\geq 1$.

\paragraph{\boldmath Final Estimates Term $(II_4 + II_5)$:  $L^2$ Estimate.}
Due to more technical and lengthy arguments, we for this term prove the bound in Appendix~\ref{sec:appendix}.
Starting from \eqref{eq:fbjbbb} and using estimate \eqref{eq:technical-result} we get
\begin{align}
\label{eq:complicated-term}
|II_4 + II_5| &=
\biggl|
\sum_{m=1}^M a_{VE}^m( (I-R_E)\bfu_1, \cproj \bfPhi_{VE}^m) \, dt
\biggr|
\\
&\lesssim T
(h^{s} + k h^{s-1}) \| \bfu_1 \|_{L^\infty(H^s(\Omega))} \|\mcL_{h,E} \bfPsi_0 \|_{L^2(\Omega)}
\end{align}
which concludes the proof.

\end{proof}


\section{Numerical Results}
\label{sect:numerical}

To illustrate the method and its analysis, we in this section provide results from numerical experiments. Unless otherwise stated, SI-units are assumed.

\subsection{Convergence} \label{sect:convergence}

To confirm our a priori error estimates, we manufacture a problem with a known analytical solution by making the following ansatz on the velocity field
\begin{align}
\dot{\bfu}_0(t;x,y,z) := \exp(1-t)(a_1 t + a_2 t^2)
\begin{bmatrix}
\frac{3}{4} \cos(\pi x)  \sin\left( \frac{\pi( 2 y+1)}{4} \right) \sin(\pi z) \\
\frac{3}{4} \sin\left( \frac{\pi( 2 x+1)}{4} \right) \cos(\pi y) \sin(\pi z) \\
\sin\left( \frac{\pi( 2 x+1)}{4} \right) \sin\left( \frac{\pi( 2 y+1)}{4} \right) \sin\left(\frac{\pi z}{2}\right)
\end{bmatrix}
\label{eq:vansatz}
\end{align}
with $a_1=a_2=\frac{1}{5}$. The computational domain $\Omega \times [0,T]$ for the problem is the unit cube $\Omega = [0,1]^3$ and end time $T=1 $. On the cube bottom, $z=0$, we have the homogenous Dirichlet condition $\bfu_0=\bfzero$ and on the remainder of the boundary, we have non-homogenous Neumann conditions manufactured from the ansatz \eqref{eq:vansatz}.
We choose to simulate a viscoelastic material featuring an elastic part and a single Maxwell arm, i.e., the standard linear solid model. As material parameters we choose density $\rho = 100$, Poisson ratio $\nu = 0.3$, $E$- and Maxwell arm stiffness moduli $E = \kappa = 10^5$, and relaxation time $\tau = 10^3/\kappa$.

We here present simulation results using standard Lagrange $P_3$ finite elements on tetrahedra for the spatial discretization and piecewise linears for the time integration (Crank--Nicolson). The reason for using finite elements with as high polynomial approximation as $p=3$ is to generate cases where time integration is the dominant source of error. To confirm our a priori estimates in Theorem \ref{eq:energyestimate}, we perform numerical simulations for the model problem using timesteps ranging from $k=1$ to $k=1/128$ and mesh sizes ranging from $h=1$ to $h=1/5$.

\paragraph{Convergence in Energy Norm.}
In Figure~\ref{fig:Eerror_vs_h} we plot the error in energy norm against the mesh size for each choice of timestep $k$. As the error in the time integration becomes dominant when the mesh size decreases, the curves level out.
The curve for the shortest timestep $k=1/128$ does not level out in this range of mesh, and thus, we assume the error in the space discretization is dominant, indicating a convergence rate of $h^p$ in agreement with the energy estimate in Theorem~\ref{eq:energyestimate}.
In Figure~\ref{fig:Eerror_vs_k}, we make the corresponding plot of the error in energy norm against the timestep for each choice of mesh size $h$. This indicates a convergence rate of $k^2$ in regions where
the time integration dominates the error. In conclusion, the numerical study of convergence in energy norm
gives support for an estimate of the form
\begin{align}
\tn \bfu(T) - \bfU(T) \tn_A \lesssim h^p + k^2
\end{align}
as is the energy estimate in Theorem~\ref{eq:energyestimate}.

\begin{figure}
\centering
\includegraphics[width=0.70\linewidth]{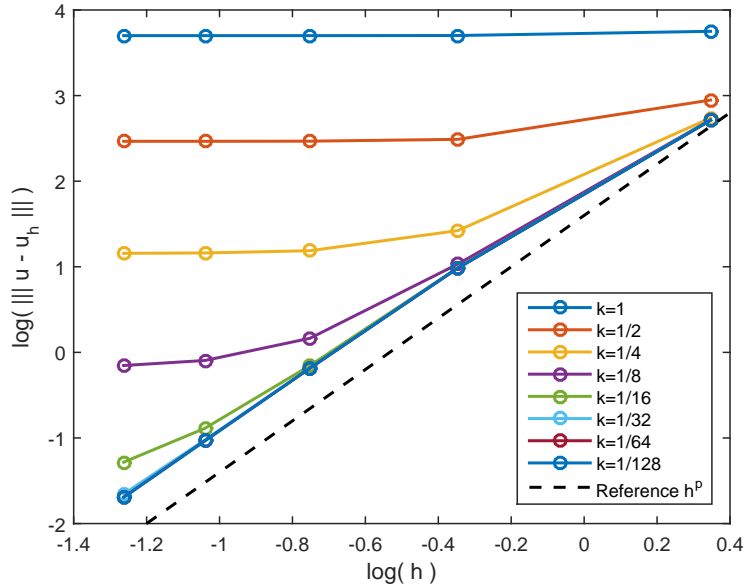}
\caption{Convergence of model problem end time error in energy norm with respect to the mesh size $h$.
For the finite element approximation, standard Lagrange $P_3$ elements on tetrahedra were used ($p=3$).}
\label{fig:Eerror_vs_h}
\end{figure}

\begin{figure}
\centering
\includegraphics[width=0.70\linewidth]{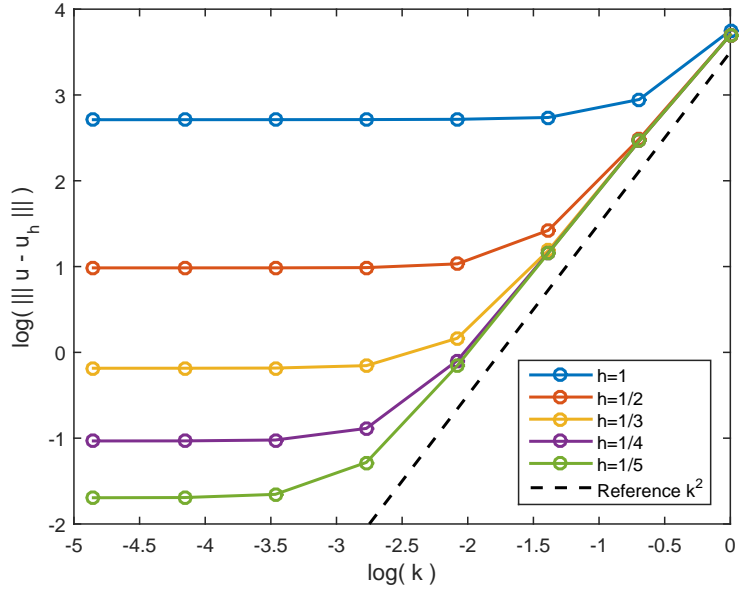}
\caption{Convergence of model problem end time error in energy norm with respect to the timestep $k$.
For the finite element approximation, standard Lagrange $P_3$ elements on tetrahedra were used ($p=3$).}
\label{fig:Eerror_vs_k}
\end{figure}

\paragraph{Convergence in ${\boldsymbol L}^2$-norm.}
Analogously to the study of the end time error in energy norm we in
Figure~\ref{fig:L2error_vs_h} and Figure~\ref{fig:L2error_vs_k} provide numerical results for the end time error of the displacements in $L^2$ norm.
In agreement with Theorem \ref{eq:energyestimate} these results
indicate that the end time $L^2$-error estimate for the displacement field should be of the form
\begin{align}
\| \bfu_0(T) - \bfU_0(T) \|_{L^2(\Omega)} \lesssim h^{p+1} + k^2
\end{align}
We do not clearly observe the additional $k h^p$ scaling also implied by the estimate. However, such a term would need a very large constant to be noticable in the ranges of $h$ and $k$, when either $k^2$ is dominating the error, or $h^{p+1}$ is dominating the error.

\begin{figure}
\centering
\includegraphics[width=0.70\linewidth]{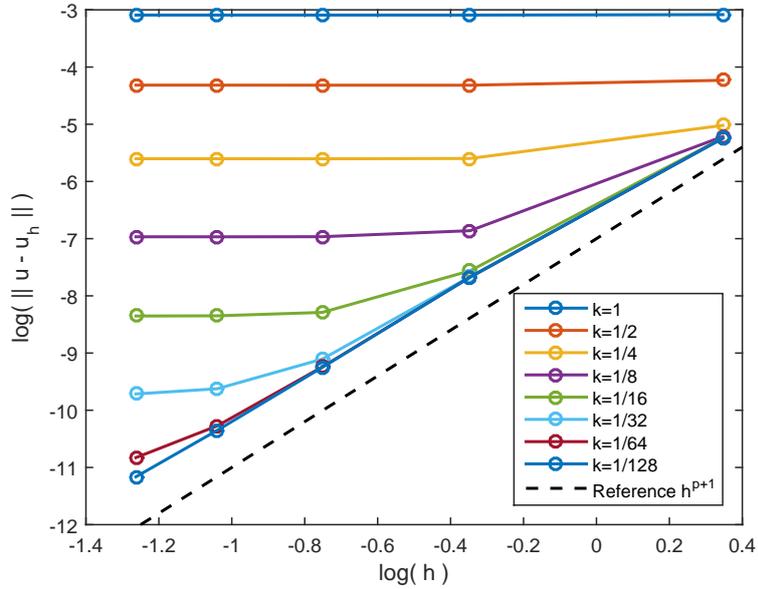}
\caption{Convergence of model problem end time $L^2(\Omega)$-error with respect to the mesh size $h$.
For the finite element approximation, standard Lagrange $P_3$ elements on tetrahedra were used ($p=3$).}
\label{fig:L2error_vs_h}
\end{figure}

\begin{figure}
\centering
\includegraphics[width=0.70\linewidth]{./figs/L2error_vs_timestep_p3}
\caption{Convergence of model problem end time $L^2(\Omega)$-error with respect to the timestep $k$.
For the finite element approximation, standard Lagrange $P_3$ elements on tetrahedra were used ($p=3$).}
\label{fig:L2error_vs_k}
\end{figure}

\subsection{Energy Conservation}

To illustrate Lemma~\ref{lemma:disccons}, the discrete conservation law, we consider the same cube and material as specified in Section~\ref{sect:convergence} above. The bottom of the cube ($z=0$) is fixed while $2/5$ of the lid ($z=1$)
is prescribed an initial displacement of $\bfu_0=(0,0,0.20)$ and there are no external forces. The initial deformation in the whole domain is the steady state solution illustrated in Figure~\ref{fig:initial_state}.
Time integration is performed using a timestep $k=0.01$ and the spatial discretization is done with a mesh size $h=1/5$. After a simulation time of $t=0.1$, the prescribed displacement on the part of the lid is released, and the simulation continues until $T=0.5$.
Recorded energies throughout this simulation are presented in Figure~\ref{fig:conservation}.
Note that this conservation law is fulfilled independent of the timestep.

\begin{figure}
\centering
\includegraphics[width=0.40\linewidth]{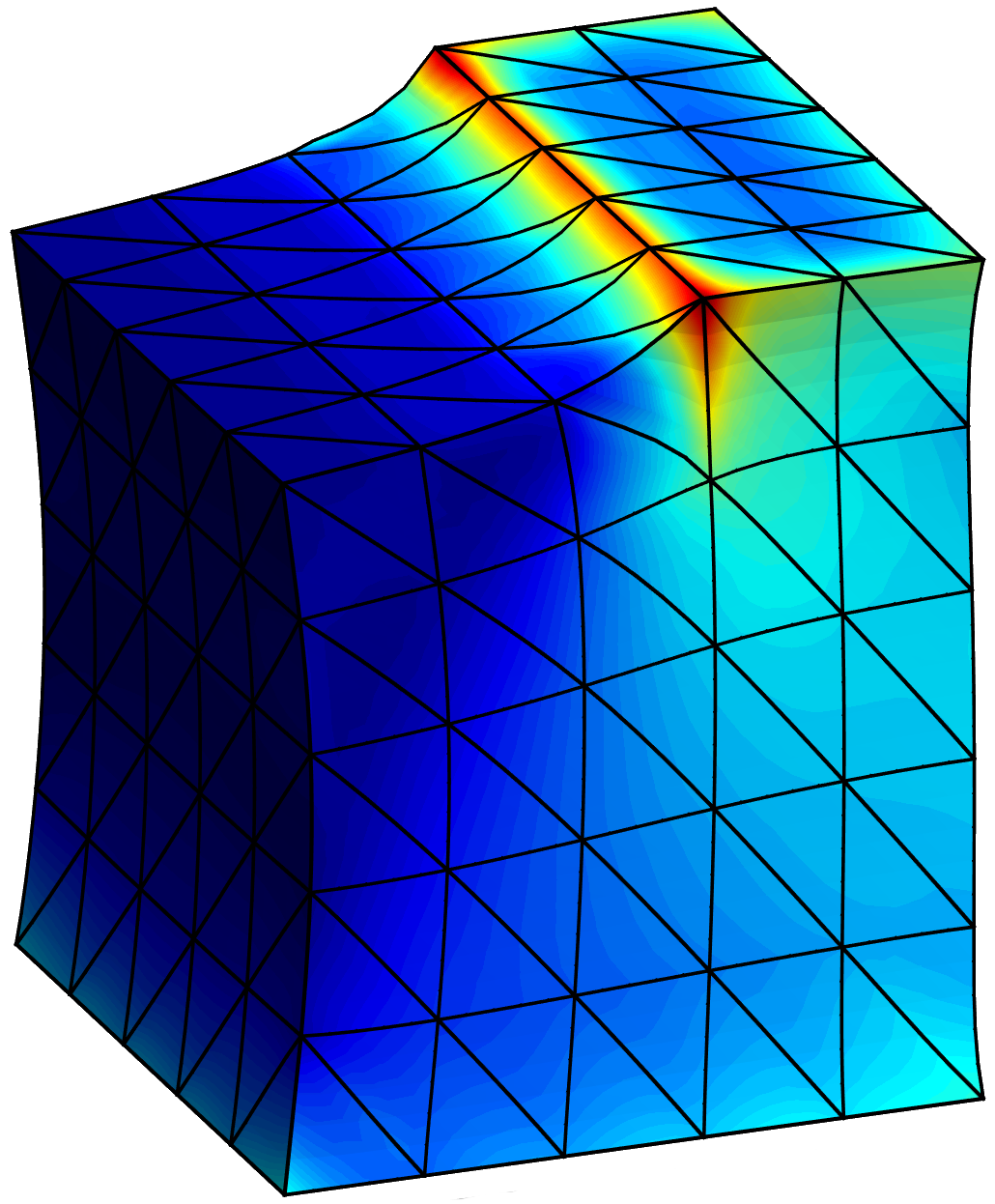}
\caption{Initial state of the deformed unit cube in the conservation law experiment with the color indicating the von Mises stress.
Displacements on part of the lid are initially prescribed to $\bfu=(0,0,0.20)$ and then released.
}
\label{fig:initial_state}
\end{figure}

\begin{figure}
\centering
\includegraphics[width=0.80\linewidth]{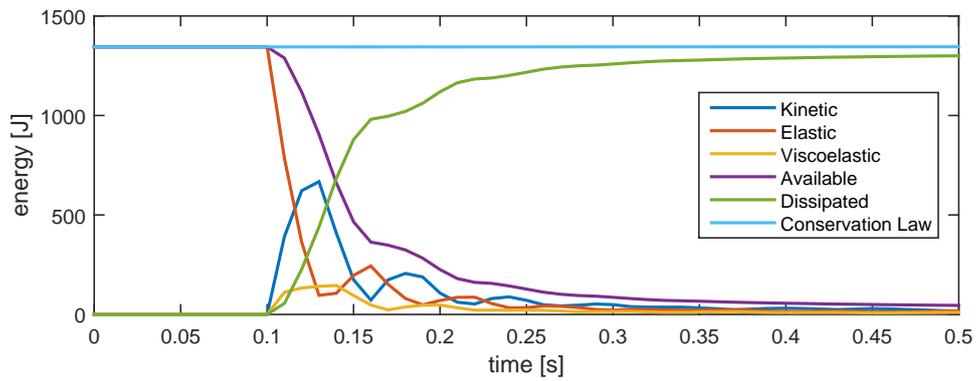}
\caption{
Energies in the discrete conservation law during simulation where an initially deformed cube is released at $t=0.1$. The purple line denoted `Available' is the free energy of the system.
}
\label{fig:conservation}
\end{figure}


\subsection{Numerical Example: A Radial Shaft Seal}
Mechanical seals are used in a wide variety of engineering applications when two surfaces need to be joined to prevent leakage, contamination, contain pressure, etc. Due to many preferred properties, such as mechanical damping, viscoelastic elastomers are used when manufacturing seals.
It is important that a seal holds tight under expected operational conditions, and therefore accurate descriptions of viscoelastic materials are necessary in the design process.

To demonstrate our viscoelastic model, we mimic the following problem: a seal is applied to a vibrating shaft connected to an outer structure, and we wish to investigate the seal holding conditions for a fabricated viscoelastic material under changing shaft vibrations.  

As a simple model of a seal, we use a pipe geometry with inner radius $0.6\times 10^{-2}$, outer radius $1.0\times 10^{-2}$ and length $2.0\times 10^{-2}$, see Figure \ref{fig:seal_domain}. We clamp the outer boundary, i.e., impose homogeneous Dirichlet conditions $\bfu_0 = \mathbf{0}$. To simulate the application of the seal on the shaft, we apply a slip boundary condition to the inner boundary
\begin{align}
\bfu_0 \cdot \bfn &= u_n(\bfx,t) \ , \qquad (\bfI-\bfn\otimes\bfn)\bfsig \bfn = \bfzero \qquad \text{on $\Gamma_{\text{shaft}}$}
\end{align}
where the function $u_n(\bfx,t)$ is defined such that the radial displacement yields a cylindrical shape radially expanded by $1\%$ of the inner radius, moving in a circular motion as illustrated in Figure~\ref{fig:seal_slip_condition}. This boundary condition will mimic the shaft's vibration at different frequencies $\omega$ and the circular motion has an amplitude of $0.1 \%$ of the inner radius. As our fabricated material, we choose the following parameters: elastic modulus $E = 0.5\times 10^6$, Poisson ratio $\nu = 0.39$, and viscoelastic parameters found in Table~\ref{Tab:VE_params}. For spatial discretization, we use standard Lagrange $P_2$ finite elements on tetrahedra. Our model consists of $2880$ elements.

When simulating holding conditions for the seal, we measure the contact pressure $P = \bfn\cdot\bfsig\bfn$, and the seal will hold as long as $P \geq 0$. When negative contact pressure arises, our simulation is no longer valid as the simulation does not include a contact model, and thus, we view any negative contact pressure as an indication of seal failure. In Figure \ref{fig:pressure_shells}, we show the pressure on the inner surface for the frequency $\omega = 10$ at four different times during one vibration cycle.

We investigate the pressure on the inner surface at the end time of three simulations with different frequencies $\omega$; $1$, $7$ and $15$, as seen in Figure \ref{fig:pressure_surfaces}. Non-positive pressures are seen in the cases of $\omega =7$ and $\omega =15$, indicating that the seal might not hold. We then make a sweep over a range of frequencies $ \omega = 10^{-2}$ to $\omega = 10^3$. During the sweep, we measure the pressure at three points along the axial direction, near the endpoint, at $\nicefrac{1}{4}$ and $\nicefrac{1}{2}$ length of the seal. We consider an indication of failure if the pressure at any point goes below zero. We measure during the last cycle for each frequency and plot the minimum and maximum pressures in each point against frequency in Figure \ref{fig:pressure_vs_w_graph}. From this, we see that the minimum pressure goes below zero around $10$ near the endpoint, and up to $10^3$, it remains negative. This supports the indication that the seal might be suitable only for frequencies lower than $10$.

\begin{figure}
\centering
\includegraphics[width=0.5\linewidth]{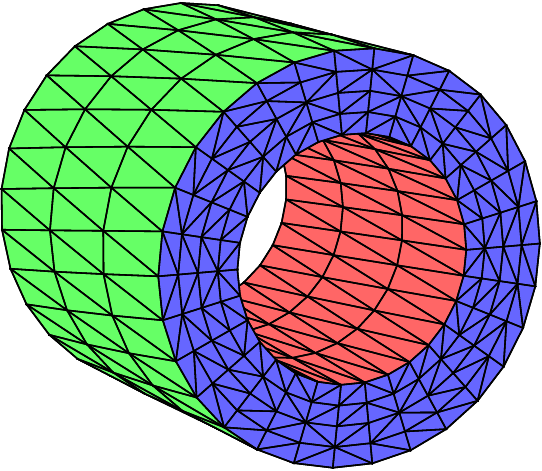}
\caption{The computational domain of the seal. The clamped surface is green, the red surface is subject to a slip boundary condition and a radial displacement, and the blue surfaces are free. }
\label{fig:seal_domain}
\end{figure}

\begin{figure}
\centering
\includegraphics[width=0.95\linewidth]{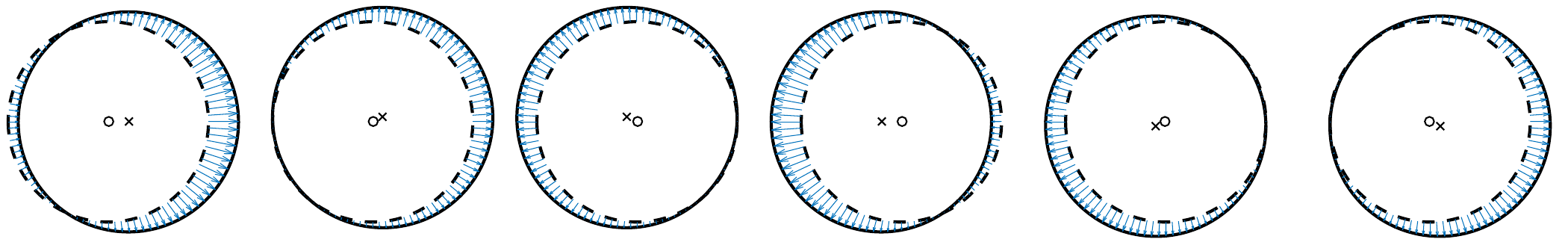}
\caption{Illustration of the slip boundary condition on the inner surface of the seal. The dashed circle illustrates the surface of the material at rest centered around the small circle.
The blue vectors illustrate a prescribed displacement in the surface's normal direction, such that the deformed surface - the solid line - is a circle with a larger radius orbiting the center along an elliptical path.
}
\label{fig:seal_slip_condition}
\end{figure}

\begin{figure}
\centering
\includegraphics[width=.75\linewidth]{./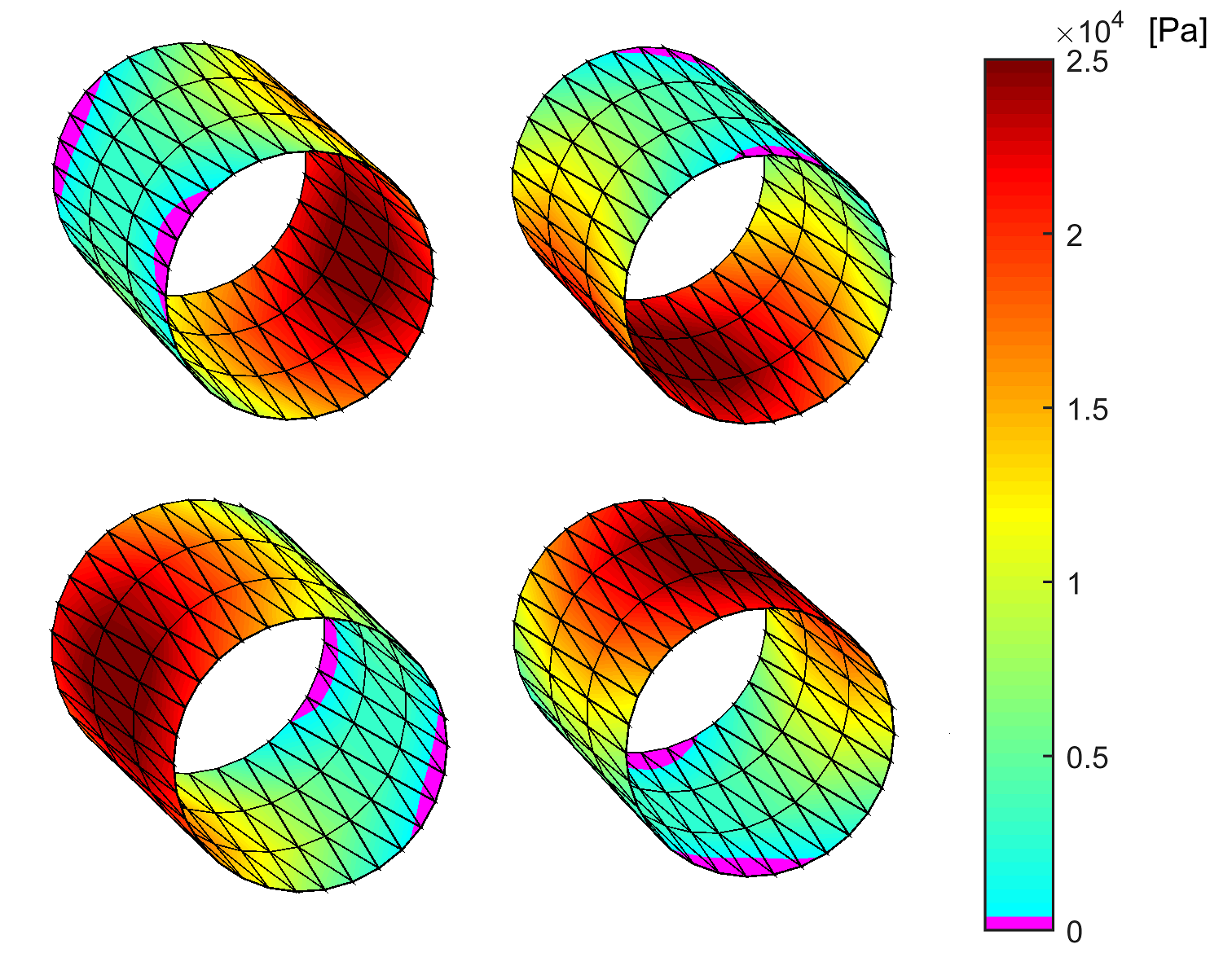}
\caption{Pressure on the inner surface. From top left to bottom right, four different states during the last cycle of a run with $\unit[10]{Hz}$. Pressure below zero is colored pink.}
\label{fig:pressure_shells}
\end{figure}

\begin{figure}
\centering
\includegraphics[width=1\linewidth]{./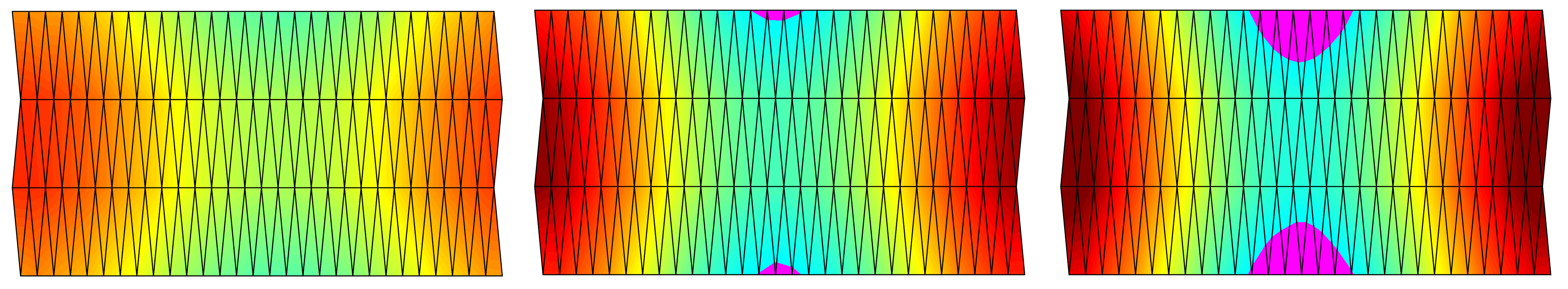}
\caption{The inner shell of the seal is cut and unwrapped as a plane. 
The pressure on the inner surface at the last time-step of three simulations. From left to right for the frequencies $\unit[1]{Hz}$, $\unit[7]{Hz}$, $\unit[15]{Hz}$. Pink denotes negative pressure. }
\label{fig:pressure_surfaces}
\end{figure}

\begin{figure}
\centering
\includegraphics[width=1\linewidth]{./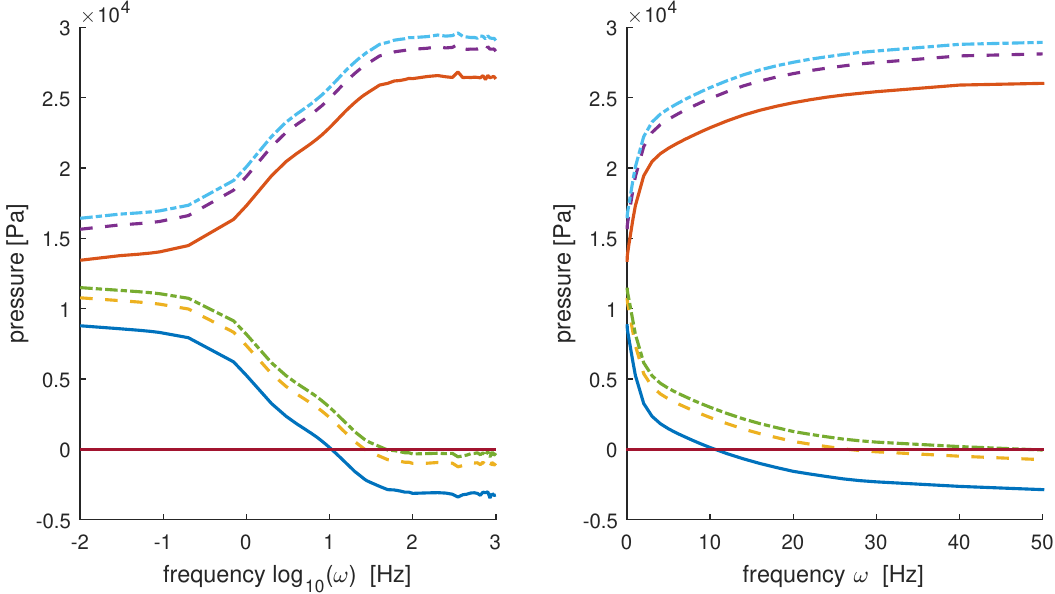}
\caption{Maximum and minimum contact pressure measured at an endpoint (solid), at a quarter of the seal length (dashed), and midpoint (dot-dashed) at different frequencies of the off-center motion of the inner boundary. The left plot shows pressure vs. logarithm frequency spanning $\unit[0.01 - 1000]{Hz}$, and the right plot shows pressure vs. frequency zoomed in between $\unit[0.01 - 50]{Hz}$.
}
\label{fig:pressure_vs_w_graph}
\end{figure}

\begin{table}
\centering
\caption{Viscoelastic parameters for the radial shaft seal problem.}
\begin{tabular}{lccccc}
\toprule
$m$  & 1 & 2 & 3 & 4  & 5 \\
\midrule
$\tau_m$  [\unit{s}] &  $1.0\cdot 10^{-2}$ & $1.0\cdot 10^{-1}$ & $1.0\cdot 10^{0}$ & $1.0\cdot 10^{1}$ & $1.0\cdot 10^{2}$    \\
$\kappa_m$  [\unit{Pa}] &  $3.5 \cdot 10^6$ & $4.0 \cdot 10^6$ & $2.5 \cdot 10^5$ & $2.5 \cdot 10^5$ & $5.0\cdot 10^5$ \\
\bottomrule
\end{tabular}

\label{Tab:VE_params}
\end{table}


\section{Conclusions} \label{sec:conclusions}
In this work we have presented and analyzed a space-time finite element method for the dynamic simulation of linear viscoelastic materials modeled using the generalized Maxwell model. A priori error space-time estimates in both energy norm and in $L^2$ norm were proven. We see the following benefits of our approach:
\begin{itemize}
\item The basis for showing our space-time error estimates is an error representation formula derived using a discrete dual problem. By utilizing different stabilities of the dual solution, this formula yields end-time estimates in different norms. Since inequalities using Grönvall's lemma are avoided, these estimates only linearly depend on the end time $T$.
This framework should readily adapt also to other discretizations in space, for instance dG methods suitable for incompressible or nearly incompressible materials \cite{MR1886000}, which polymers often are.
 
\item In contrast to previous works, our analysis does not use the simplifying assumption of formulating the viscoelastic components of the stress in terms of the same differential operator in space as the elastic stress. Rather, the viscoelastic stress is here more realistically modeled using the deviatoric strain as the differential operator. This choice necessitates a more involved analysis, in particular the bounds proven in Appendix~\ref{sec:appendix}.

\item By eliminating the viscoelastic variables, see Section~\ref{sec:elimination}, the final system is reduced to having only displacements and velocities as unknowns. This yields a very efficient method with essentially the same computational complexity as the dynamic simulation of purely elastic materials.
\end{itemize}
A natural continuation of this work is the extension of the analysis to higher-order approximations in time.

\bigskip
\paragraph{Acknowledgement.} This research was supported in part by the Swedish Research
Council Grants Nos.\  2017-03911,  2021-04925;  and the Swedish
Research Programme Essence.

\bibliographystyle{habbrv}
\footnotesize{
\bibliography{ViscoRef}
}

\bigskip
\noindent
\footnotesize {\bf Authors' addresses:}

\smallskip
\noindent
Martin Björklund  \quad \hfill \addressumushort\\
{\tt martin.bjorklund@math.umu.se}

\smallskip
\noindent
Karl Larsson, \quad \hfill \addressumushort\\
{\tt karl.larsson@umu.se}

\smallskip
\noindent
Mats G. Larson,  \quad \hfill \addressumushort\\
{\tt mats.larson@umu.se}

\clearpage
\appendix
\normalsize
\section{\boldmath Technical Result for the $L^2(\Omega)$ Estimate}
\label{sec:appendix}

For notational simplicity we here assume that we only have one viscoelastic field, i.e., one Maxwell arm in the viscoelastic model. Then, the term \eqref{eq:fbjbbb} that we cannot handle using the standard arguments of our framework takes the form
\begin{align} \label{eq:simpler-term}
II_4 + II_5 &=
\int_0^T a_{VE}( (I-R_E)\bfu_1, \cproj \bfPhi_{VE}) \, dt
\end{align}
The arguments below are easily generalized to the case of multiple Maxwell arms. The main complicating factor in \eqref{eq:simpler-term} is that the Ritz projection $R_E$ does not match the operator in the bilinear form $a_{VE}(\cdot,\cdot)$. If the viscoelastic stresses would instead be expressed in terms of the elastic stress operator, the Ritz projection and the bilinear form would match and hence \eqref{eq:simpler-term} would simply vanish.
Instead, we employ the estimate
\begin{align} \label{eq:technical-result}
\boxed{
|II_4 + II_5|
\lesssim T
(h^{s} + k h^{s-1}) \| \bfu_1 \|_{L^\infty(H^s(\Omega))} \|\mcL_{h,E} \bfPsi_0 \|_{L^2(\Omega)}
}
\end{align}
which we prove below. The general idea is to utilize that we have the stability for 
\begin{align} \label{eq:stability-we-want-to-use}
\|\mcL_{h,E} \bfPhi_0 + \mcL_{h,VE} \bfPhi_{VE}\|_{L^\infty(L^2(\Omega))}
\end{align}
but note that we do not have separate control over $\mcL_{h,E} \bfPhi_0$ or $\mcL_{h,VE} \bfPhi_{VE}$.

\subsection{Modified Operator}
There is a close relationship between $\bfPhi_{VE}$ and $\bfPhi_{0}$, which is enclosed in the discrete dual problem \eqref{eq:discdualprob}.
In this section we use this relationship to construct a modified differential operator depending on a single field $\bfPhi_{VE}$, such that the leading order terms have a structure corresponding to the stability we want to utilize \eqref{eq:stability-we-want-to-use}.

\paragraph{\boldmath Expressing $\bfPhi_{VE}$ in Terms of $\bfPhi_{0}$.}
The relationship between $\bfPhi_{VE}$ and $\bfPhi_{0}$ over a time interval $I_n$ can be formulated as the ODE
\begin{align}
P_0 \left(-\dot{\bfPhi}_{VE}+\dot{\bfPhi}_{0}+\frac{1}{\tau}\bfPhi_{VE}\right)=0
\end{align}
which has the explicit solution
\begin{align}
\bfPhi_{VE}(t_{n-1}) = 
\underbrace{\frac{2}{2+k/\tau}}_{=:\alpha}\bigl(\bfPhi_0(t_{n-1})-\bfPhi_0(t_{n})\bigr)
+
\underbrace{\frac{2-k/\tau}{2+k/\tau}}_{=:\beta} \bfPhi_{VE}(t_{n})
\end{align}
and we note that $0 < \alpha<1$ and $|\beta| < 1$.
Given that $\bfPhi_{VE}(T)=0$, this recursive relationship can be expanded entirely in $\bfPhi_0$ as
\begin{align}
\bfPhi_{VE}(t_{n-1}) &= \sum_{i=n}^{N} \alpha \beta^{i-n}
\bigl(\bfPhi_0(t_{i-1})-\bfPhi_0(t_{i})\bigr)
\\&=
\alpha
\bfPhi_0(t_{n-1})
-\alpha \beta^{N-n} \bfPhi_0(T)
+
\alpha
\sum_{i=n}^{N-1} \beta^{i-n} (\beta-1)\bfPhi_0(t_{i})
\\&=
\alpha
\bfPhi_0(t_{n-1})
-\alpha \beta^{N-n} \bfPhi_0(T)
-
\frac{\alpha^2 k}{\tau}
\sum_{i=n}^{N-1} \beta^{i-n} \bfPhi_0(t_{i})
\label{eq:Phi0-expansion}
\end{align}
where we used $\beta - 1 = \frac{2-k/\tau}{2+k/\tau} - \frac{2+k/\tau}{2+k/\tau} = -\frac{2k/\tau}{2+k/\tau} = -\alpha k/\tau$. Note that the coefficients in the sum in \eqref{eq:Phi0-expansion} are exponentially decaying, since $|\beta|<1$.

\paragraph{Differential Operator.}
To get a differential operator with the correct structure,
we to \eqref{eq:simpler-term} add a term that is zero by the definition of the Ritz projection $R_E$, such that
\begin{align}
II_4 + II_5 &=
\int_0^T \underbrace{a_E((I-R_E) \bfu_1, \alpha^{-1}\cproj\bfPhi_{VE})}_{=0}
+
a_{VE}( (I-R_E)\bfu_1, \cproj \bfPhi_{VE}) \, dt
\\ &=
\int_0^T \bigl(\bfeps(\cproj(I-R_E) \bfu_1), \underbrace{\alpha^{-1}\bfsig_{E}(\bfPhi_{VE})
+
\bfsig_{VE}(\bfPhi_{VE})}_{=:\bfsig_\bigstar(\bfPhi_{VE})} \bigr) \, dt
\label{eq:modified-term}
\end{align}
Since $0 < \alpha < 1$, the bilinear form $(\bfsig_\bigstar(\bfv),\bfeps(\bfw))$ is coercive with energy norm $\tn \bfv \tn_\bigstar^2 := (\bfsig_\bigstar(\bfv),\bfeps(\bfv))$.

\subsection{Discrete Elliptic Regularity}
In this section, we will prove the bound
\begin{align} \label{eq:disc-elliptic-reg}
| ( \bfeps(\bfw) , \bfsig_\bigstar(\bfPhi_{VE}) ) |
&\lesssim
\left( \| \bfw \|_{L^2(\Omega)}^2 + h^2 \| \bfw\otimes \nabla \|_{L^2(\Omega)}^2 \right)^{1/2}
\\&\qquad \nonumber
\cdot\| \mcL_{h,E} (\alpha^{-1}\bfPhi_{VE}) + \mcL_{h,VE} \bfPhi_{VE} \|_{L^2(\Omega)}
\end{align}
for $\bfw\in \V$.

\paragraph{Continuous Field.}
We here introduce a continuous field $\tilde{\bfPhi}_{VE}$ such that the discrete field $\bfPhi_{VE}$ is the Ritz projection of $\tilde{\bfPhi}_{VE}$.
By the definitions of the discrete differential operators \eqref{eq:discrete-operator-E} and \eqref{eq:discrete-operator-VE}, we see that $\bfPhi_{VE}\in\Vh$ satisfies
\begin{align}\label{eq:star-disc}
(\bfsig_\bigstar(\bfPhi_{VE}),\bfeps(\bfv))
= 
(\mcL_{h,E} (\alpha^{-1}\bfPhi_{VE}) + \mcL_{h,VE} \bfPhi_{VE} , \bfv )
\qquad\text{for all $\bfv\in\Vh$}
\end{align}
We introduce the corresponding continuous problem; let $\tilde{\bfPhi}_{VE}$ be the solution to
\begin{alignat}{2}\label{eq:star-strong}
\bfsig_\bigstar(\tilde{\bfPhi}_{VE})\cdot\nabla &= \mcL_{h,E} (\alpha^{-1}\bfPhi_{VE}) + \mcL_{h,VE} \bfPhi_{VE} &\qquad&\text{in $\Omega$}
\\
\label{eq:star-neumann}
\bfsig_\bigstar(\tilde{\bfPhi}_{VE})\bfn &=\bfzero &\qquad&\text{on $\Gamma_N$}
\\
\tilde{\bfPhi}_{VE} &= \bfzero &\qquad&\text{on $\Gamma_D$}
\end{alignat}
which in weak form reads; find $\tilde{\bfPhi}_{VE}\in\V$ such that
\begin{align}\label{eq:star-cont}
(\bfsig_\bigstar(\tilde{\bfPhi}_{VE}),\bfeps(\bfv))
= 
(\mcL_{h,E} (\alpha^{-1}\bfPhi_{VE}) + \mcL_{h,VE} \bfPhi_{VE} , \bfv )
\qquad\text{for all $\bfv\in\V$}
\end{align}
For this continuous problem, we have the elliptic regularity
\begin{align} \label{eq:star-regularity}
| \tilde{\bfPhi}_{VE} |_{H^2(\Omega)} \lesssim \| \mcL_{h,E} (\alpha^{-1}\bfPhi_{VE}) + \mcL_{h,VE} \bfPhi_{VE} \|_{L^2(\Omega)}
\end{align}
Comparing \eqref{eq:star-disc} and \eqref{eq:star-cont}, we realize that $\bfPhi_{VE}$ is the Ritz projection of $\tilde{\bfPhi}_{VE}$, with energy norm error bound
\begin{align} \label{eq:star-ritz-est}
\tn \tilde{\bfPhi}_{VE} - \bfPhi_{VE} \tn_\bigstar
\lesssim h | \tilde{\bfPhi}_{VE} |_{H^2(\Omega)}
\lesssim h \| \mcL_{h,E} (\alpha^{-1}\bfPhi_{VE}) + \mcL_{h,VE} \bfPhi_{VE} \|_{L^2(\Omega)}
\end{align}

\paragraph{Integration by Parts.}
For compactness in the calculations below, we here let
\begin{align}
\bfsig_h
&:=
\bfsig_\bigstar(\bfPhi_{VE})
, \qquad
\tilde{\bfsig}
:=
\bfsig_\bigstar(\tilde{\bfPhi}_{VE})
, \qquad
\pi_h\tilde{\bfsig}
:=
\bfsig_\bigstar(R_{VE}\tilde{\bfPhi}_{VE}),
\end{align}
Integrating by parts elementwise and applying the Cauchy-Schwarz inequality, give us
\begin{align}
| ( \bfeps(\bfw) , \bfsig_h ) |
& =
\Bigl|
\sum_{K\in\mcK_h}
\bigl(\bfw, \bfsig_h \bfn \bigr)_{\partial K}
- \bigl(\bfw, \bfsig_h \cdot\nabla \bigr)_K
\Bigr|
\\& =
\Bigl|
\bigl(\bfw, \llbracket \bfsig_h \bfn_F \rrbracket \bigr)_{\mcF_h}
-
\bigl(\bfw, \bfsig_h\cdot\nabla \bigr)_{\mcK_h}
\Bigr|
\\& \leq
\Bigl|
\bigl(\bfw, \llbracket \bfsig_h \bfn_F \rrbracket \bigr)_{\mcF_h}
\Bigr|
+
\Bigl|
\bigl(\bfw, \bfsig_h \cdot\nabla \bigr)_{\mcK_h}
\Bigr|
\\& \leq
h^{1/2}\| \bfw \|_{L^2(\mcF_h)}
h^{-1/2} \bigl\| \llbracket \bfsig_h \bfn_F \rrbracket \bigr\|_{L^2(\mcF_h)}
+ \| \bfw \|_{L^2(\Omega)}
\bigl\| \bfsig_h \cdot\nabla \bigr\|_{L^2(\mcK_h)}
\\& \leq
\left( h \| \bfw \|_{L^2(\mcF_h)}^2 + \| \bfw \|_{L^2(\Omega)}^2 \right)^{1/2}
\\ & \ \quad\cdot \nonumber
\Bigl( h^{-1}\bigl\| \llbracket \bfsig_h \bfn_F \rrbracket \bigr\|_{L^2(\mcF_h)}^2 + \bigl\| \bfsig_h\cdot\nabla \bigr\|_{L^2(\mcK_h)}^2 \Bigr)^{1/2}
\\& \lesssim
\left( \| \bfw \|_{L^2(\Omega)}^2 + h^2 \| \bfw\otimes \nabla \|_{L^2(\Omega)}^2 \right)^{1/2}
\\ & \ \quad\cdot \nonumber
\Bigl(
\underbrace{h^{-1}\bigl\| \llbracket \bfsig_h \bfn_F \rrbracket \bigr\|_{L^2(\mcF_h)}^2}_{I_\bigstar}
+
\underbrace{\bigl\| \bfsig_h\cdot\nabla \bigr\|_{L^2(\mcK_h)}^2}_{II_\bigstar}
\Bigr)^{1/2}
\end{align}
where we in the final inequality applied the standard elementwise trace inequality
\begin{align} \label{eq:std-trace}
\| \bfv \|_{L^2(\partial K)}^2 \lesssim h^{-1}\| \bfv \|_{L^2(K)}^2 + h\| \bfv\otimes \nabla \|_{L^2(K)}^2
\end{align}
Here, $\mathcal{F}_h$ denotes the set of all interior edges in $\mathcal{K}_h$, as well as all edges on the Neumann boundary, where we define the jump $\llbracket \bfsig_h \bfn_F \rrbracket|_{\Gamma_N} = \bfsig_h \bfn|_{\Gamma_N}$.

\paragraph{\boldmath Term $I_\bigstar$.}
Thanks to the regularity of $\tilde{\bfPhi}_{VE}$ and that $\tilde{\bfsig}\bfn=\bfzero$ on $\Gamma_N$, we can subtract $\tilde{\bfsig}$ inside the jump $\llbracket \cdot \rrbracket$ without affecting the value of the term. Thereafter applying the trace inequality \eqref{eq:std-trace}, we get
\begin{align}
h^{-1}\bigl\| \llbracket \bfsig_h \bfn_F \rrbracket \bigr\|_{L^2(\mcF_h)}^2
&=
h^{-1}\bigl\| \llbracket (\bfsig_h -\tilde{\bfsig}) \bfn_F \rrbracket \bigr\|_{L^2(\mcF_h)}^2
\\&\lesssim
h^{-2}
\| \bfsig_h -\tilde{\bfsig} \|_{L^2(\Omega)}^2
+
\| (\bfsig_h -\tilde{\bfsig})\otimes\nabla \|_{L^2(\mathcal{K}_h)}^2
\label{eq:skjvdb}
\end{align}

For the first term in \eqref{eq:skjvdb} we utilize the error estimate \eqref{eq:star-ritz-est}, giving us
\begin{align} \label{eq:sdivbsiud}
h^{-2}
\| \bfsig_h -\tilde{\bfsig} \|_{L^2(\Omega)}^2
&\sim h^{-1}
\tn \tilde{\bfPhi}_{VE} - \bfPhi_{VE} \tn_\bigstar^2
\lesssim
\| \mcL_{h,E} (\alpha^{-1}\bfPhi_{VE}) + \mcL_{h,VE} \bfPhi_{VE} \|_{L^2(\Omega)}^2
\end{align}

For the second term in \eqref{eq:skjvdb}, we add and subtract the interpolant $\pi_h\tilde{\bfsig}$, use an inverse inequality on the discrete term, whereafter we add and subtract $\tilde\bfsig$ in the lower order term, yielding
\begin{align}
\label{eq:odfbjdfb}
\| (\bfsig_h -\tilde{\bfsig})\otimes\nabla \|_{L^2(\mathcal{K}_h)}^2
&\lesssim
\| (\bfsig_h - \pi_h \tilde{\bfsig})\otimes\nabla \|_{L^2(\mathcal{K}_h)}^2
+
\| (\tilde{\bfsig}-\pi_h \tilde{\bfsig})\otimes\nabla \|_{L^2(\mathcal{K}_h)}^2
\\
&\lesssim
h^{-2}\| \bfsig_h - \pi_h \tilde{\bfsig} \|_{L^2(\Omega)}^2
+
\| (\tilde{\bfsig}-\pi_h \tilde{\bfsig})\otimes\nabla \|_{L^2(\mathcal{K}_h)}^2
\\
&\lesssim \label{eq:bjdsffd}
h^{-2}\| \bfsig_h - \tilde{\bfsig} \|_{L^2(\Omega)}^2
+
h^{-2}\| \tilde{\bfsig} - \pi_h \tilde{\bfsig} \|_{L^2(\mathcal{K}_h)}^2
\\&\quad\nonumber
+
\| (\tilde{\bfsig}-\pi_h \tilde{\bfsig})\otimes\nabla \|_{L^2(\mathcal{K}_h)}^2
\\&
\lesssim
\| \mcL_{h,E} (\alpha^{-1}\bfPhi_{VE}) + \mcL_{h,VE} \bfPhi_{VE} \|_{L^2(\Omega)}^2
\end{align}
For the final inequality, we estimate the first term in \eqref{eq:bjdsffd} as \eqref{eq:sdivbsiud}, and the remaining two terms using interpolation estimates.

\paragraph{\boldmath Term $II_\bigstar$.}
Adding and subtracting the interpolant $\pi_h\tilde{\bfsig}$ as well as $\tilde{\bfsig}$ give
\begin{align}
\bigl\| \bfsig_h\cdot\nabla \bigr\|_{L^2(\mcK_h)}
&\lesssim
\bigl\| (\bfsig_h - \pi_h\tilde{\bfsig} ) \cdot \nabla \bigr\|_{L^2(\mcK_h)}^2
+
\bigl\| ( \pi_h \tilde{\bfsig} - \tilde{\bfsig}) \cdot \nabla \bigr\|_{L^2(\mcK_h)}^2
+
\bigl\| \tilde{\bfsig} \cdot \nabla \bigr\|_{L^2(\mcK_h)}^2
\\
&\lesssim \label{eq:jbvddbsdl}
\bigl\| (\bfsig_h - \pi_h\tilde{\bfsig} ) \otimes \nabla \bigr\|_{L^2(\mcK_h)}^2
+
\bigl\| (\tilde{\bfsig} - \pi_h \tilde{\bfsig}) \otimes \nabla \bigr\|_{L^2(\mcK_h)}^2
+
\bigl\| \tilde{\bfsig} \cdot \nabla \bigr\|_{L^2(\mcK_h)}^2
\\&
\lesssim
\| \mcL_{h,E} (\alpha^{-1}\bfPhi_{VE}) + \mcL_{h,VE} \bfPhi_{VE} \|_{L^2(\Omega)}^2
\end{align}
In the right hand side of \eqref{eq:jbvddbsdl}, we note that the first two terms are the same as the right hand side of \eqref{eq:odfbjdfb}, and for the final term we use the strong form of the continuou problem \eqref{eq:star-strong}.
This completes the proof of \eqref{eq:disc-elliptic-reg}.

\subsection{Stability Bound}
We here show that we have stability of the term
\begin{align}
\| \mcL_{h,E} (\alpha^{-1}\bfPhi_{VE}) + \mcL_{h,VE} \bfPhi_{VE} \|_{L^\infty(L^2(\Omega))}
\end{align}
By calculations analogous to \eqref{eq:time-max-start}--\eqref{eq:time-max-end}, we note that $\| \mcL_{h,E} (\alpha^{-1}\bfPhi_{VE}) + \mcL_{h,VE} \bfPhi_{VE} \|_{L^2(\Omega)}$ must attain its maximum value at a node in time. Hence, we below assume $t=t_{n-1}$ and that this is the point in time where the maximum value is attained.
Adding and subtracting terms and using the triangle inequality gives
\begin{align}
\| \mcL_{h,E} (\alpha^{-1}\bfPhi_{VE}) + \mcL_{h,VE} \bfPhi_{VE} \|_{L^2(\Omega)}
&\leq
\| \mcL_{h,E} \bfPhi_0 + \mcL_{h,VE} \bfPhi_{VE} \|_{L^2(\Omega)}
\\&\qquad\nonumber
+
\| \mcL_{h,E}(\bfPhi_{0} - \alpha^{-1}\bfPhi_{VE}) \|_{L^2(\Omega)}
\end{align}
where we have stability for the first term.
In the second term, we replace $\bfPhi_{VE}$ by its expansion in $\bfPhi_{0}$ given by \eqref{eq:Phi0-expansion}, and we get
\begin{align}
&\| \mcL_{h,E}(\bfPhi_{0} - \alpha^{-1}\bfPhi_{VE}) \|_{L^2(\Omega)}
\nonumber
\\
&\qquad\leq
|\beta|^{N-n}\| \mcL_{h,E} \bfPhi_0(T) \|_{L^2(\Omega)}
+
\frac{\alpha}{\tau}
\sum_{i=n}^{N-1} k \beta^{i-n}
\| \mcL_{h,E} \bfPhi_0(t_{i}) \|_{L^2(\Omega)}
\\
&\qquad\leq
\| \mcL_{h,E} \bfPhi_0(T) \|_{L^2(\Omega)}
+
\max_{n\leq i \leq N}  k \| \mcL_{h,E}\bfPhi_0(t_{i}) \|_{L^2(\Omega)}
\frac{\alpha^2}{\tau}
\sum_{i=n}^{N}
\beta^{i-n}
\\
&\qquad\lesssim
\| \mcL_{h,E} \bfPsi_0 \|_{L^2(\Omega)}
+
\max_{n\leq i \leq N}  k \| \mcL_{h,E}\bfPhi_0(t_{i}) \|_{L^2(\Omega)}
\end{align}
where we use $\sum_{i=n}^{N} \beta^{i-n} \leq \frac{1}{1-\beta}$ since $|\beta|<1$.
Repeatedly adding and subtracting suitable terms and using the triangle inequality, yield
\begin{align}
\| \mcL_{h,E} \bfPhi_0 \|_{L^2(\Omega)}
&\leq
\| \mcL_{h,E} \bfPhi_0 + \mcL_{h,VE} \bfPhi_{VE} \|_{L^2(\Omega)}
+
\| \mcL_{h,VE} \bfPhi_{VE} \|_{L^2(\Omega)}
\\
&\lesssim \label{eq:dlfbkfdsbk}
\| \mcL_{h,E} \bfPhi_0 + \mcL_{h,VE} \bfPhi_{VE} \|_{L^2(\Omega)}
\\&\qquad\nonumber
+
\bigl\| \mcL_{h,VE}\bigl(-\bfPhi_{1} + \frac{1}{\tau} \bfPhi_{VE} \bigr) \bigr\|_{L^2(\Omega)}
+
\| \mcL_{h,VE} \bfPhi_{1} \|_{L^2(\Omega)}
\end{align}
where the first term in \eqref{eq:dlfbkfdsbk} is directly bounded by our stability.
For the remaining two terms in \eqref{eq:dlfbkfdsbk} we use that $\| \mcL_{h,VE} \bfv \|_{L^2(\Omega)} \lesssim h^{-1} \tn \bfv \tn_{VE}$ for $\bfv\in\Vh$, which follows from the calculation
\begin{align}
\| \mcL_{h,VE} \bfv \|_{L^2(\Omega)}^2
&= (\mcL_{h,VE} \bfv,\mcL_{h,VE} \bfv)
\\&= (\bfsig_{VE}(\bfv),\bfeps(\mcL_{h,VE} \bfv))
\\&= (\bfsig_{VE}(\bfv),  (\mcL_{h,VE} \bfv) \otimes\nabla)
\\&\leq \kappa^{1/2} \tn \bfv \tn_{VE}
\| (\mcL_{h,VE} \bfv) \otimes\nabla\|_{L^2(\Omega)}
\\&\lesssim h^{-1} \tn \bfv \tn_{VE}
\|\mcL_{h,VE} \bfv \|_{L^2(\Omega)}
\end{align}
where we used an inverse inequality in the final inequality.
Hence, we arrive at the bound
\begin{align}
\max_{n\leq i \leq N}
k \| \mcL_{h,E} \bfPhi_0 \|_{L^2(\Omega)}
&\lesssim
k \| \mcL_{h,E} \bfPhi_0 + \mcL_{h,VE} \bfPhi_{VE} \|_{L^\infty(L^2(\Omega))}
\\&\qquad\nonumber
+
k h^{-1} \bigl\| -\bfPhi_{1} + \frac{1}{\tau} \bfPhi_{VE} \bigr\|_{L^\infty(VE)}
+
k h^{-1} \| \bfPhi_{1} \|_{L^\infty(E)}
\end{align}
where we for the third term used $\tn \bfv \tn_{VE} \lesssim \tn \bfv \tn_{E}$.
In summary, we have the stability
\begin{align} \label{eq:star-final-stability}
\| \mcL_{h,E} (\alpha^{-1}\bfPhi_{VE}) + \mcL_{h,VE} \bfPhi_{VE} \|_{L^\infty(L^2(\Omega))}
\lesssim (1 + k h^{-1})\| \mcL_{h,E} \bfPsi_0 \|_{L^2(\Omega)}
\end{align}

\subsection{Final Estimate}

Let $\bfw=\cproj(I-R_E) \bfu_1$. Continuing from \eqref{eq:modified-term}, applying the discrete elliptic regularity \eqref{eq:disc-elliptic-reg} followed by an interpolation estimate and the stability bound \eqref{eq:star-final-stability}, we have
\begin{align}
| II_4 + II_5 | &=
\biggl|
\int_0^T \bigl(\bfeps(\bfw), \bfsig_\bigstar(\bfPhi_{VE}) \bigr) \, dt
\biggr|
\\&\lesssim
\int_0^T 
\left( \| \bfw \|_{L^2(\Omega)}^2 + h^2 \| \bfw\otimes \nabla \|_{L^2(\Omega)} \right)^{1/2}
\\&\qquad \nonumber
\cdot\| \mcL_{h,E} (\alpha^{-1}\bfPhi_{VE}) + \mcL_{h,VE} \bfPhi_{VE} \|_{L^2(\Omega)}
 \, dt
\\&\lesssim
\int_0^T 
h^{s} \| \bfu_1 \|_{H^{s}(\Omega)}
(1 + k h^{-1})\| \mcL_{h,E} \bfPsi_0 \|_{L^2(\Omega)}
 \, dt
\\&\leq
T (h^{s} + k h^{s-1}) \| \bfu_1 \|_{L^\infty(H^s(\Omega))} \|\mcL_{h,E} \bfPsi_0 \|_{L^2(\Omega)}
\end{align}
which concludes the proof of \eqref{eq:technical-result}.

\end{document}